\documentclass[flen,a4paper]{article}
\usepackage{amsmath,amsthm,amssymb,amscd}
\usepackage{graphicx}
\usepackage{graphics}
\usepackage{verbatim}
\usepackage{enumerate}
\usepackage{mathrsfs}
\usepackage{color}
\usepackage[top=1in, bottom=0.8in, left=0.8in, right=0.8in]{geometry}

\makeatletter

\newcommand{\Rmnum}[1]{\expandafter\@slowromancap\romannumeral #1@}

\usepackage{color}

\newtheorem{theorem}{Theorem}[section]

\newtheorem{problem}[theorem]{Problem}
\newtheorem{lemma}[theorem]{Lemma}
\newtheorem{definition}[theorem]{Definition}
\newtheorem{corollary}[theorem]{Corollary}

\makeatother

\newcommand{\intt}{{\rm int}}
\newcommand{\ext}{{\rm ext}}

\begin{document}
	
	\title{$(I,F)$-partition of planar graphs without cycles of length 4, 6, or 9}

	\vspace{3cm}
	\author{Yingli Kang\footnotemark[1]~, Hongkai Lu\footnotemark[2]~, Ligang Jin\footnotemark[3]}
	
	\footnotetext[1]{Department of Mathematics, Jinhua Polytechnic, Western Haitang Road 888, 321017 Jinhua, China. Email: ylk8mandy@126.com}
		
	\footnotetext[2]{College of Mathematics,
	Zhejiang Normal University, Yingbin Road 688,
	321004 Jinhua,
	China. Email: 1298488241@qq.com}

	\footnotetext[3]{Corresponding author. College of Mathematics,
	Zhejiang Normal University, Yingbin Road 688,
	321004 Jinhua,
	China.
	Email: ligang.jin@zjnu.cn}
	
	\date{}
	
	\maketitle

\begin{abstract}
  A  graph $G$ is $(I,F)$-partitionable if its vertex set can be partitioned into two parts such that one part
  is an independent set, and the other induces a forest. 
  In this paper, we prove that every planar graph without cycles of length  $4, 6, 9$ is $(I,F)$-partitionable.

\end{abstract}

\textbf{Keywords}: Planar graphs; $(I,F)$-partition; Independent set; Forest; Short cycles

\section{Introduction}
The planar graph $G$ considered in this paper is finite and simple.
A graph $G$ is \emph{$k$-degenerate} if every subgraph $H$ of $G$ contains a vertex of degree at most $k$ in $H$.
Clearly, $k$-degenerate graphs are $(k+1)$-colorable. Let $p$ and $q$ be two nonnegative integers.
A graph $G$ is \emph{$(p,q)$-partitionable} if $V(G)$ can be partitioned into two subsets which induces a $p$-degenerate subgraph and a $q$-degenerate subgraph of $G$, respectively.
Thomassen \cite{C1,C2} proved that planar graphs are both $(1,2)$-partitionable and $(0,3)$-partitionable.

A  graph $G$ is \emph{$(I,F)$-partitionable} if its vertices set can be partitioned into two parts, one part
is an independent set, and the other induces a forest. 
Clearly, $(I,F)$-partitionable is same as $(0,1)$-partitionable.
Borodin and Glebov \cite{O.A 2001} confirmed that every planar graph of girth at least 5 is $(I,F)$-partitionable. Kawarabayashi and  Thomassen \cite{K.C} proved an extension of this result and guessed it might be true that every triangle-free planar graph is $(I,F)$-partitionable. 

Clearly, every $(I,F)$-partitionable graph is signed 3-colorable and is further 3-colorable. Liu and Yu \cite{R.G np468} proved that planar graph without cycles of length $4$, $6$, or $8$ are $(I,F)$-partitionable, which extends the result of Wang and Chen \cite{W.C} that they are 3-colorable.
In this paper, we are interested in the following problem with the same flavour. 

\begin{problem}\label{pro_4ij_IF}
	For which pair of integers $(i,j)$ with $4< i< j< 10$, planar graphs without cycles of length from $\{4,i,j\}$ are $(I,F)$-partitionable.
\end{problem}

Denote by $G[S]$ the subgraph of a graph $G$ induced by a set $S$ with $S\subseteq V(G)$ or $S\subseteq E(G)$.
\begin{definition}
	Let $C$ be a cycle of a plane graph $G$. 
	An edge inside $C$ connecting two non-consecutive vertices of $C$ is called a \emph{chord} of $C$. 
	If a vertex $v\in \intt(C)$ has three neighbors $v_1,v_2,v_3$ on $C$, then $G[\{vv_1,vv_2, vv_3\}]$ is called a \emph{claw} of $C$.
	If $u\in \intt(C)$ has two neighbors $u_1$ and $u_2$ on $C$, $v\in \intt(C)$ has two neighbors $v_1$ and $v_2$ on $C$, and $uv\in E(G)$, then $G[\{uv,uu_1,uu_2,vv_1,vv_2\}]$ is called a \emph{biclaw} of $C$.
	If each of three pairwise adjacent vertices $u,v,w\in \intt(C)$ has a neighbor on $C$, say $u', v', w'$ respectively, then
	$G[\{uv,vw,uw,uu',vv',ww'\}]$ is called a \emph{triclaw} of $C$.
	The cycles into which a chord, a claw, a biclaw, or a triclaw  divides $C$ are called \emph{cells}, see Figure \ref{fig-claw}.
	A cell of length $c_i$ is called a \emph{$c_i$-cell}.
	We further call a $(c_1,c_2)$-chord, a \emph{$(c_1,c_2,c_3)$-claw}, a \emph{$(c_1,c_2,c_3,c_4)$-biclaw}, or a \emph{$(c_1,c_2,c_3,c_4)$-triclaw}, as depicted in Figure \ref{fig-claw}.
	\begin{figure}[ht]
		\centering
		\includegraphics[width=14cm]{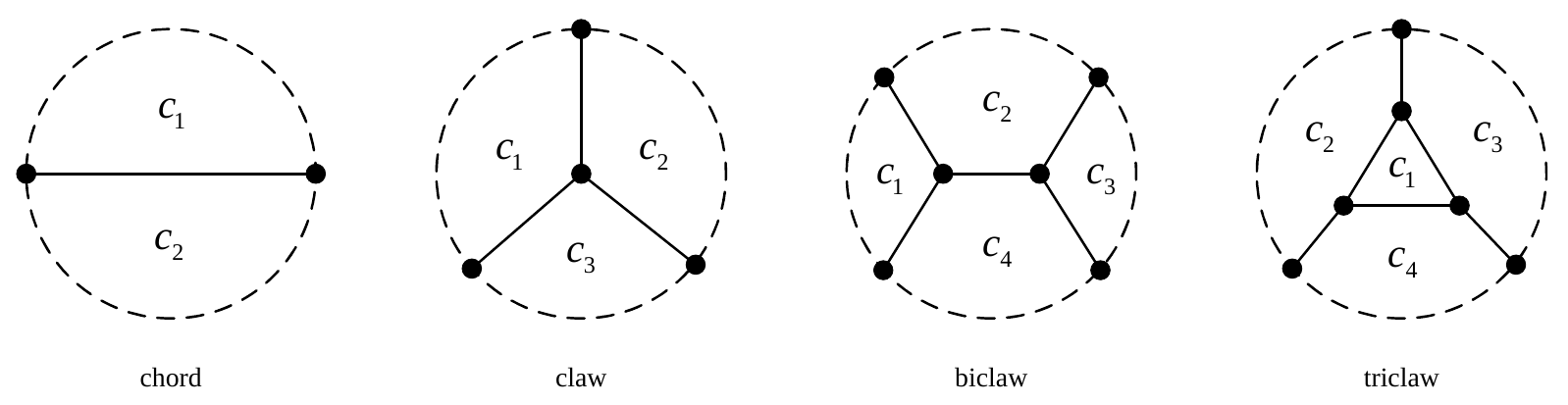}
		\caption{A cycle $C$ in dotted line and a chord, a claw, a biclaw, and a triclaw of $C$ in solid line}\label{fig-claw}
	\end{figure}

\end{definition}
A 9-cycle is \emph{special} if it has a $(3,8)$-chord or a $(5,5,5)$-claw.
Let $\mathcal{G}$ denote the class of connected  plane graphs with neither 4- or 6-cycles nor special 9-cycles.
The following theorem is the main result of this paper.
\begin{theorem} \label{thm469}
	Every graph of $\mathcal{G}$ is $(I,F)$-partitionable.
\end{theorem}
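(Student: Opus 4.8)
The plan is to argue by contradiction via the discharging method, which is the standard tool for this genre of planar-graph partition results. So I would assume that $G$ is a \emph{minimal counterexample} to Theorem~\ref{thm469}: a graph in $\mathcal{G}$ of minimum order (or minimum $|V|+|E|$) that is not $(I,F)$-partitionable. The first task is to establish \emph{reducible configurations} --- local structures that cannot occur in $G$. Each such proof is uniform in spirit: remove or contract a small piece to obtain a smaller graph $G'$, note that $G'$ still lies in $\mathcal{G}$ (here one must check that deleting vertices or edges creates no new $4$-cycle, $6$-cycle, or special $9$-cycle, which is immediate since subgraphs only destroy cycles), invoke minimality to obtain an $(I,F)$-partition $(I',F')$ of $G'$, and then extend it back to $G$. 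The extension step is where the hypotheses on degrees get used: a vertex of low degree can be placed in $I$ if its neighbours in $I$ can be avoided, or in $F$ if it does not close a cycle within $F$. I expect the basic reducible configurations to include the absence of vertices of degree $1$, degree-$2$ vertices with restricted neighbourhoods, and constraints forcing $G$ to be essentially $3$-regular-ish near its low-degree vertices.

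Next I would set up the discharging framework on the plane embedding. Assign to each vertex $v$ the charge $d(v)-4$ and to each face $f$ the charge $\ell(f)-4$, where $\ell(f)$ is the facial length; by Euler's formula $\sum_v (d(v)-4) + \sum_f (\ell(f)-4) = -8 < 0$, so the total charge is negative. The absence of $4$-cycles and $6$-cycles means $G$ has no faces of length $4$ or $6$, and more importantly no two triangles share an edge and triangles are somewhat sparse; likewise the exclusion of \emph{special} $9$-cycles restricts how short cells can be distributed around a face. The core of the argument is to design discharging rules --- typically triangles (the only negatively-charged faces, with charge $-1$) pull charge from their incident large faces or from high-degree vertices --- such that after redistribution every vertex and every face has nonnegative final charge. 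Since charge is conserved, this contradicts the negative total, proving no counterexample exists.

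The main obstacle, and the part that will consume most of the work, is the interplay between the discharging rules and the precise list of reducible configurations: one must verify that in the \emph{absence} of all reducible configurations, the structural constraints are strong enough to make every final charge nonnegative. Concretely, faces of length $7,8$ can be slightly negative in intermediate accounting and must receive help, while the exclusion of the two flavours of special $9$-cycle (a $(3,8)$-chord and a $(5,5,5)$-claw) is almost certainly tailored exactly to rule out the worst local configuration around a $9$-face that would otherwise defeat the count. I would therefore expect the delicate lemmas to concern $9$-cycles: how chords, claws, biclaws, and triclaws partition them into cells, and which cell-length patterns force a reducible configuration inside $C$. The cell terminology introduced in the definition strongly suggests that the key structural lemma will say something like ``a separating $9$-cycle in $G$ with few vertices in its interior either has all cells long, or else contains a reducible configuration,'' letting one reduce to the case where $G$ has no small separating cycles and the discharging count closes.

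In executing this, the order I would follow is: (i) prove $G$ is $2$-connected and has minimum degree at least $2$; (ii) establish a battery of reducible configurations by the delete-extend method above, pushing as far as possible on degree-$2$ and degree-$3$ vertices and on short faces adjacent to triangles; (iii) prove the $9$-cycle cell lemmas that exploit the special-$9$-cycle hypothesis; (iv) state the discharging rules and, in a case analysis over vertex degrees and face lengths, verify nonnegativity of every final charge; and (v) conclude from the contradiction. The hardest single step is (iv) conditioned on (ii)--(iii): making the rules and the reducible-configuration list mesh so that no face or vertex is left negative is where the genuine ingenuity lies, and it is the step most likely to require iterating the choice of configurations and rules until the global count balances.
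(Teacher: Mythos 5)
Your overall architecture (minimal counterexample, reducible configurations, discharging) matches the paper's in outline, but there is a genuine gap at the heart of your plan: the choice of induction statement. You propose to take a minimal counterexample to the bare statement ``every graph of $\mathcal{G}$ is $(I,F)$-partitionable'' and to extend a partition of a smaller graph $G'$ back to $G$, placing a vertex in $F$ ``if it does not close a cycle within $F$.'' Whether adding $v$ to $F$ closes a cycle depends on whether two of its $F$-neighbours are already joined by an $F$-path in $G'$ --- global information over which the unrelativized induction hypothesis gives you no control, so the delete-and-extend step does not close. This is exactly why the paper does not prove Theorem~\ref{thm469} directly: it first proves the stronger Theorem~\ref{thm_main_extension}, a precoloring-extension statement in which the extension is required to be a \emph{super-extension} (no $F$-path in $G-E(H)$ joins two precolored boundary vertices). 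That extra condition, together with the notion of $F$-linked pairs, is what lets each reducible-configuration argument verify \emph{locally} that recoloring a few vertices creates neither an $F$-cycle nor a splitting $F$-path of the boundary. Theorem~\ref{thm469} is then deduced by choosing a triangle, taking the outermost $10^-$-cycle containing it, coloring that cycle arbitrarily, and super-extending both inward and outward. Your proposal contains no analogue of this relativization, and without it the extension steps fail.

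A secondary but real problem: you assert that checking $G'\in\mathcal{G}$ is ``immediate since subgraphs only destroy cycles,'' yet your own reductions include contractions, and the paper's reductions identify vertices, identify edges, and insert edges (Lemmas~\ref{operation} and~\ref{operation_merge_edge}). These operations can create $4$-cycles, $6$-cycles, special $9$-cycles, and a bad outer cycle; ruling this out is precisely the role of Lemma~\ref{splitting path} and of the hypotheses $(a)$ and $(b)$ of Lemma~\ref{operation}, none of which your sketch anticipates. Finally, note that the paper does not carry out the discharging itself: it observes that its reducible configurations coincide with those of the 3-colorability paper \cite{469} and imports that discharging verbatim, so the step you single out as hardest is, in this paper, outsourced rather than redone.
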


This theorem implies the result of Liu and Yu \cite{R.G np468} and a new result on Problem \ref{pro_4ij_IF} as follows.

\begin{corollary}[\cite{R.G np468}] \label{cor468}
	Every planar graph without cycles of length 4, 6, or 8 is $(I,F)$-partitionable.
\end{corollary}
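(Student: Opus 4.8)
The plan is to obtain Corollary~\ref{cor468} as an immediate consequence of Theorem~\ref{thm469}, by verifying that every planar graph $G$ with no cycles of length $4$, $6$, or $8$ falls, after routine reductions, into the class $\mathcal{G}$. First I would reduce to the connected case: $(I,F)$-partitionability is inherited by disjoint unions, since the union of the independent parts is again independent and the union of the forest parts is again a forest (distinct components share no edges and hence create no new cycles). So it suffices to prove each connected component is $(I,F)$-partitionable, and I then fix an arbitrary plane embedding of such a component, turning it into a connected plane graph with no $4$- or $6$-cycles.

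The heart of the argument is to show that forbidding $8$-cycles already forbids \emph{special} $9$-cycles, so that the component lies in $\mathcal{G}$. Recall a special $9$-cycle is one carrying a $(3,8)$-chord or a $(5,5,5)$-claw. For a $(3,8)$-chord, the $8$-cell is literally an $8$-cycle of $G$, excluded by hypothesis. For a $(5,5,5)$-claw centered at an interior vertex $v$ with neighbors $v_1,v_2,v_3$ on the $9$-cycle $C$, the cell-length bookkeeping ($5+5+5 = 9 + 2\cdot 3$, each cell being one arc plus two spokes) forces the three arcs of $C$ between consecutive $v_i$ to have length $3$ each.

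The step I expect a reader to overlook is that these length-$3$ arcs combine with the spokes to produce a forbidden cycle: traversing two of the arcs, say $v_1$ to $v_2$ to $v_3$, and returning through the spokes $v_3 v$ and $v v_1$, yields a closed walk of length $3+3+1+1 = 8$ on pairwise distinct vertices, i.e.\ an $8$-cycle. Since $G$ has none, the $(5,5,5)$-claw cannot occur. Note that although chords and claws are defined via the embedding, the $8$-cycle they produce is an intrinsic subgraph, so the absence of $8$-cycles rules out special $9$-cycles in \emph{every} embedding.

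Having excluded $4$- and $6$-cycles (by hypothesis) and special $9$-cycles (by the above), each connected plane component belongs to $\mathcal{G}$; Theorem~\ref{thm469} makes it $(I,F)$-partitionable, and reassembling the components finishes the proof. The only genuinely nontrivial point is locating the hidden $8$-cycle inside a $(5,5,5)$-claw; everything else is straightforward bookkeeping, so I anticipate no serious obstacle.
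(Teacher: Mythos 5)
Your proposal is correct and is exactly the derivation the paper intends (the paper states Corollary~\ref{cor468} as an immediate consequence of Theorem~\ref{thm469} without writing out the details): the $8$-cell of a $(3,8)$-chord is an $8$-cycle, and in a $(5,5,5)$-claw the two spokes together with two of the length-$3$ arcs form an $8$-cycle, so a graph with no $4$-, $6$-, or $8$-cycles has no special $9$-cycles and each connected component, once embedded, lies in $\mathcal{G}$. The component-wise reduction and the observation that the produced $8$-cycles are embedding-independent are both sound.
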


\begin{corollary} \label{cor469}
	Every planar graph without cycles of length 4, 6, or 9 is $(I,F)$-partitionable.
\end{corollary}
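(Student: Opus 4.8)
The plan is to obtain the statement as a direct consequence of Theorem~\ref{thm469}, so the first step is a reduction and the only substantive work is hidden in the theorem itself. Let $G$ be a planar graph with no cycle of length $4$, $6$, or $9$, and fix a plane embedding of $G$. First I would reduce to the connected case: writing $G_1,\dots,G_t$ for the connected components of $G$, any family of $(I,F)$-partitions of the $G_i$ assembles into one of $G$, since the union of independent sets drawn from pairwise non-adjacent components is again independent, and a disjoint union of forests is a forest (a cycle is connected, hence lives inside a single component). Thus it suffices to partition one component $H$.

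Now the key observation is that the excluded cycle lengths already place $H$ inside $\mathcal{G}$. Indeed $H\subseteq G$ has no cycle of length $4$, $6$, or $9$; in particular $H$ has no $9$-cycle whatsoever, so a fortiori it has no \emph{special} $9$-cycle, because a special $9$-cycle is first of all a $9$-cycle. Together with the absence of $4$- and $6$-cycles and the connectedness of $H$, this says exactly that $H\in\mathcal{G}$. Applying Theorem~\ref{thm469} to each component and reassembling as above yields the result. The plane embedding is needed only to make sense of $\mathcal{G}$, whose definition refers to chords and claws and hence to the drawing; but since $H$ has no $9$-cycle at all, the special-$9$-cycle condition holds for \emph{every} embedding, so no care is required in choosing one.

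Given this, the real content is Theorem~\ref{thm469}, and that is where I expect all the difficulty to lie. I would prove it by a minimal-counterexample-plus-discharging argument: take $G\in\mathcal{G}$ that is not $(I,F)$-partitionable and minimizes $|V(G)|+|E(G)|$, and first establish a list of reducible configurations. The extension step is more delicate than for ordinary $3$-colouring because the two colour classes obey different constraints: putting a vertex $v$ into the independent part is legal only if $v$ has no neighbour already there, while putting $v$ into the forest part is legal only if $v$ does not have two neighbours in a common tree of the current forest. I would show that degree-$\le 1$ vertices, adjacent low-degree vertices, and triangles carrying light neighbourhoods are all reducible, each via such a recolouring of a smaller graph of $\mathcal{G}$.

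With the reducible configurations excluded, I would run discharging on the embedding: give each vertex $v$ the charge $d(v)-4$ and each face $f$ the charge $\ell(f)-4$ (with $\ell(f)$ the length of $f$), so that by Euler's formula the total charge is $-8$. After reducing to the $2$-connected case every face is bounded by a cycle, so the absence of $4$- and $6$-cycles leaves the triangles as the only faces of negative charge. The rules would send charge from large faces and high-degree vertices toward triangles and degree-$2$/degree-$3$ vertices, aiming to leave every vertex and face nonnegative, a contradiction with the total $-8$. The main obstacle, I expect, is precisely the handling of $9$-faces: ordinary $9$-cycles are permitted and only the special ones are forbidden, so the discharging cannot simply discard $9$-faces but must exploit the exact ban on $(3,8)$-chords and $(5,5,5)$-claws to control how charge crosses a $9$-face and its incident triangles. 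Making the rules and the surrounding case analysis balance around these almost-allowed configurations is where the argument most needs the full strength of the hypotheses, and where a gap would most easily hide.
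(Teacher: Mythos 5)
Your deduction of the corollary is exactly the one the paper intends: reduce to connected components, observe that a graph with no $9$-cycles a fortiori has no special $9$-cycles, so each component lies in $\mathcal{G}$, and apply Theorem~\ref{thm469}. The additional sketch of how Theorem~\ref{thm469} itself would be proved is not needed for this statement (the paper proves it separately via the super-extension Theorem~\ref{thm_main_extension} and a discharging argument imported from \cite{469}), and your reduction of the corollary is correct and essentially identical to the paper's.
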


\begin{corollary} \label{cor469s}
	Every planar graph without cycles of length 4, 6, or 9 is signed 3-colorable.
\end{corollary}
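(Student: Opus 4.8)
The plan is to obtain Corollary~\ref{cor469s} as an immediate consequence of Corollary~\ref{cor469} (itself a consequence of Theorem~\ref{thm469}) together with the general fact, asserted in the introduction, that every $(I,F)$-partitionable graph is signed $3$-colourable. Concretely, let $G$ be a planar graph without cycles of length $4$, $6$, or $9$, and fix an arbitrary signature $\sigma\colon E(G)\to\{+1,-1\}$. I must produce a proper signed $3$-colouring of $(G,\sigma)$, that is, a map $c\colon V(G)\to\{-1,0,1\}$ with $c(u)\neq\sigma(uv)\,c(v)$ for every edge $uv$. Since $G$ has no $9$-cycles it has in particular no special $9$-cycles, and it has no $4$- or $6$-cycles, so by Corollary~\ref{cor469} the graph $G$ admits an $(I,F)$-partition $V(G)=I\cup F$ with $I$ independent and $G[F]$ a forest. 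This partition is independent of $\sigma$, which is what will let a single argument cover all signatures.

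I would then colour as follows. Assign $c(v)=0$ to every $v\in I$. On the forest $G[F]$, root each tree arbitrarily and colour the vertices from the root downward using only the two colours $\{-1,+1\}$: the root receives $+1$, and once a vertex $u$ has a coloured parent $p$, the value $\sigma(up)\,c(p)$ lies in $\{-1,+1\}$, so within this two-element palette there is a unique admissible colour, namely $c(u)=-\sigma(up)\,c(p)$. Because a tree is acyclic, no constraint is ever revisited and no inconsistency can arise, so this produces a proper signed colouring of each component of $G[F]$ using the palette $\{-1,+1\}$.

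Finally I would verify propriety edge by edge. There are no edges inside $I$ because $I$ is independent; edges inside $F$ are handled by the construction above. For an edge $uv$ with $u\in F$ and $v\in I$ we have $c(v)=0$ and $c(u)\in\{-1,+1\}$, so $\sigma(uv)\,c(v)=0\neq c(u)$ regardless of the sign of $uv$, using $-0=0$. Hence $c$ is a proper signed $3$-colouring of $(G,\sigma)$, and since $\sigma$ was arbitrary, $G$ is signed $3$-colourable. I expect no genuine obstacle here: the deduction is short, and the only points requiring minor care are that the palette $\{-1,+1\}$ is closed under negation, which is precisely why a forest is signed $2$-colourable under \emph{any} signature, and that the colour $0$ is self-negating, which is exactly what makes the independent set safe against both positive and negative edges joining it to $F$.
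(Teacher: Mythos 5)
Your proposal is correct and follows exactly the paper's route: the paper obtains Corollary~\ref{cor469s} from Corollary~\ref{cor469} together with the observation, asserted as ``clear'' in the introduction, that every $(I,F)$-partitionable graph is signed 3-colorable. Your contribution is simply to spell out that observation (color the independent set $0$ and two-color each tree of the forest top-down with $\{-1,+1\}$, which works under any signature), and this verification is sound.
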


 \section{Super-extended theorem}
An \emph{$(I,F)$-coloring} of a graph $G$ is a mapping from $V(G)$ to the color set $\{I,F\}$ such that vertices of the color $I$ is an independent set and vertices of the color $F$ induce a forest.
A vertex of color $F$ is called an \emph{$F$-vertex}.
A path or cycle on only $F$-vertices is called an \emph{$F$-path} or \emph{$F$-cycle}, respectively.
Let $H$ be a subgraph of a graph $G$ and $\phi$ an $(I,F)$-coloring of $H$.
A \emph{super-extension} of $\phi$ to $G$ is an $(I,F)$-coloring of $G$ whose restriction on $H$ is $\phi$ such that $G-E(H)$ contains no $F$-path connecting two vertices of $H$. 
A cycle of length at most 12 is \emph{bad} if it has a claw, a biclaw or a triclaw; \emph{good} otherwise.

We will prove the following theorem, which strengthens Theorem \ref{thm469}.
\begin{theorem} (Super-extended theorem)  \label{thm_main_extension}
	Let $G\in \cal{G}$. If the boundary $D$ of the unbounded face of $G$ is a good cycle, then every $(I,F)$-coloring of $G[V(D)]$ can super-extend to $G$.
\end{theorem}

To see that Theorem \ref{thm469} follows from Theorem \ref{thm_main_extension}, take any graph $G\in \mathcal{G}$. If $G$ has no triangles, then it has girth at least 5 and is known to be $(I,F)$-partitionable \cite{O.A 2001}. So, let $T$ be a triangle of $G$.
If there are $10^-$-cycles containing $T$ inside, then let $C$ be the outermost one, that is, the one which is contained in the interior of no other $10^-$-cycles; otherwise, let $C=T$. Take any $(I,F)$-coloring $\phi$ of $C$. Then $\phi$ can super-extend to both $\ext[C]$ and $\intt[C]$ by Theorem \ref{thm_main_extension}. This results in an $(I,F)$-coloring of $G$.

Given a plane graph $G$. A path is a \emph{splitting path} of a cycle $C$ if its two end-vertices locate on $C$ and all other vertices locate inside $C$. A path or a cycle $C$ is \emph{triangular} if it has an edge as the common part between $C$ and some triangle.
A cycle $C$ is \emph{ext-triangular} if it has an edge as the common part between $C$ and some triangle of $\ext[C]$.
Given a $(I,F)$-coloring of $G$. A pair of vertices $(u,v)$ is \emph{$F$-linked} if at least one of the following holds:
\begin{enumerate}[(1)]
    \setlength{\itemsep}{0pt} 
	\item there exists an $F$-path between $u$ and $v$;
	\item there exist two vertex-disjoint $F$-paths, one connects $u$ and some external vertex, and the other connects $v$ and another external vertex.
\end{enumerate}

Given an $(I,F)$-coloring of a subgraph $H$ of a plane graph $G$. \emph{Nicely color} an uncolored vertex $u$ assigns with the color $I$ if $u$ has no neighbors of color $I$ and assigns with the color $F$ otherwise. 
It is easy to see that if $u$ has at most two neighbors in $H$, then nicely coloring $u$ results in an $(I,F)$-coloring and brings neither $F$-cycle nor splitting $F$-path of $D$, that is, $u$ is contained in neither $F$-cycle nor splitting $F$-path of $D$.
Let $\vec{P}=v_1v_2\ldots v_k$ be a path of $G-V(H)$ with direction along this path starting from $v_1$ such that the neighbors of $v_i$ in $H$ are of the same color (say $\alpha_i$) for each $i\in \{1,2,\ldots,k\}$. 
\emph{$I$-nicely color} (resp. \emph{$F$-nicely color}) $\vec{P}$ assigns $v_i$ with the color $F$ for each $i$ with $\alpha_i=I$ and then assigns all the remaining vertices of $P$ with first $I$ and then $F$ (resp. with first $F$ and then $I$) alternately.
It is easy to see that both $F$-nicely coloring $\vec{P}$ and $I$-nicely coloring $\vec{P}$ result in an $(I,F)$-coloring and bring neither $F$-cycle nor splitting $F$-path of $D$.

\section{The proof of Theorem \ref{thm_main_extension}}    \label{sec_proof}

We shall prove Theorem \ref{thm_main_extension} by contradiction.
Let $G$ be a counterexample to Theorem \ref{thm_main_extension} with minimum $|V(G)|+|E(G)|$.
Thus, the boundary $D$ of the unbounded face $f_0$ of $G$ is a good cycle, and there exists an $(I,F)$-coloring $\phi_0$ of $G[V(D)]$ which can not super-extend to $G$.

\subsection{Reducible configurations}
\begin{lemma} \label{lem_min degree}
Every internal vertex of $G$ has degree at least 3.
\end{lemma}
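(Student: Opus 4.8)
The plan is to prove this minimum-counterexample lemma by a standard local reduction. Suppose for contradiction that some internal vertex $v$ of $G$ has degree at most $2$ in $G$. Since $v$ is internal it does not lie on $D$, so $v \notin V(D)$ and $v$ is not constrained by the given coloring $\phi_0$. The idea is to delete $v$, apply minimality to the smaller graph $G-v$, and then recover a super-extension of $\phi_0$ to all of $G$ by nicely coloring $v$ at the end.

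First I would verify that $G-v$ still lies in the class on which minimality can be invoked. Deleting a vertex cannot create new cycles of length $4$, $6$, or a special $9$-cycle, so $G-v$ has no such cycles; I must check that $G-v$ is connected and that the boundary of its unbounded face is still the good cycle $D$. Because $v$ is internal, it lies inside $D$ and is not on the outer boundary, so removing it leaves $D$ intact as the boundary of the unbounded face, and (after checking connectivity is preserved, or handling the component containing $D$) the hypotheses of Theorem~\ref{thm_main_extension} hold for $G-v$. Here I would note that $|V(G-v)| + |E(G-v)| < |V(G)| + |E(G)|$, so by the minimality of $G$ the coloring $\phi_0$ of $G[V(D)] = (G-v)[V(D)]$ super-extends to an $(I,F)$-coloring $\psi$ of $G-v$.

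Next I would extend $\psi$ to $v$. Since $\deg_G(v) \le 2$, the vertex $v$ has at most two neighbors in $G-v$, all already colored by $\psi$. By the observation recorded just before Section~3 in the excerpt, nicely coloring $v$ (assign $I$ if $v$ has no $I$-neighbor, and $F$ otherwise) yields an $(I,F)$-coloring of $G$ in which $v$ lies in neither an $F$-cycle nor a splitting $F$-path of $D$. Thus the independent set stays independent, the $F$-part stays acyclic, and no new $F$-path connecting two vertices of $D$ through $G - E(G[V(D)])$ is introduced via $v$. Combined with the super-extension property of $\psi$ on $G-v$, this shows the resulting coloring is a genuine super-extension of $\phi_0$ to $G$, contradicting the choice of $G$ as a counterexample.

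The main obstacle I anticipate is the bookkeeping around connectivity and the outer boundary rather than the coloring argument itself. If deleting $v$ disconnects $G$, one must argue that $v$ is a cutvertex only in a way compatible with $D$ being a cycle on the outer face — but since $D$ is a cycle and $v$ is interior, the component carrying $D$ still satisfies the hypotheses, and the remaining components are strictly smaller graphs to which minimality (or the girth-$5$ base case) applies; one colors each piece and reassembles. The other delicate point is confirming that nicely coloring $v$ creates no \emph{splitting} $F$-path of $D$: this is exactly what the cited observation guarantees for a vertex with at most two neighbors, since any $F$-path through $v$ uses both its $F$-colored neighbors and cannot be rerouted to join two distinct vertices of $D$ without forcing $v$ to have a third neighbor. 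Making this last implication explicit is the crux.
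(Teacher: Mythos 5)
Your proposal is correct and matches the paper's own proof, which is exactly this one-line reduction: super-extend $\phi_0$ to $G-v$ by minimality, then nicely color $v$, invoking the observation that nicely coloring a vertex with at most two colored neighbors creates neither an $F$-cycle nor a splitting $F$-path of $D$. The extra bookkeeping you supply (that $G-v$ stays in $\mathcal{G}$ with $D$ still bounding its unbounded face, and the connectivity caveat) is sound and only makes explicit what the paper leaves implicit.
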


\begin{proof}\label{lem_min_degree}
 Otherwise, let $d(v)\leq 2$. The pre-coloring $\phi_0$ can super-extend to $G-v$ by the minimality of $G$, and further to $G$ by nicely coloring $v$.
\end{proof}

\begin{lemma} \label{lem_separating-cycle}
	$G$ has no separating good cycle.
\end{lemma}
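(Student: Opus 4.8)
The plan is to argue by contradiction using the minimality of $G$, combining the two pieces produced by cutting along the separating cycle. Suppose $G$ contains a separating good cycle $C$; here ``separating'' means that $C$ has vertices both inside and outside it, so that both $\intt[C]$ and $\ext[C]$ are proper subgraphs of $G$ with strictly fewer vertices plus edges. Since $C$ is a good cycle contained in $G\in\mathcal{G}$, the boundary conditions of Theorem \ref{thm_main_extension} are satisfied by each of the two subgraphs $\ext[C]$ (whose unbounded face is still bounded by $D$) and $\intt[C]$ (whose unbounded face is bounded by $C$). The strategy is to super-extend the given pre-coloring $\phi_0$ first to $\ext[C]$ and then to $\intt[C]$, and to check that gluing the two colorings along $C$ yields a super-extension of $\phi_0$ to all of $G$, contradicting the choice of $G$ as a counterexample.

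First I would apply the minimality of $G$ to $\ext[C]$: since $D$ is a good cycle and $\ext[C]$ is a smaller member of $\mathcal{G}$ (deleting the strict interior of a cycle preserves the forbidden-cycle conditions, as it only removes edges and vertices), the pre-coloring $\phi_0$ of $G[V(D)]$ super-extends to an $(I,F)$-coloring $\phi_1$ of $\ext[C]$. The key feature of a super-extension is that $\ext[C]-E(D)$ contains no $F$-path joining two vertices of $D$; in particular the restriction of $\phi_1$ to $V(C)$ is an $(I,F)$-coloring of $G[V(C)]$, and I would want to record that within $\ext[C]$ there is no $F$-path connecting two vertices of $C$ other than along $C$ itself. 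Next I would treat $\intt[C]$, whose outer boundary is the good cycle $C$: by minimality again, the coloring $\phi_1|_{V(C)}$ super-extends to an $(I,F)$-coloring $\phi_2$ of $\intt[C]$, so that $\intt[C]-E(C)$ has no $F$-path between two vertices of $C$. Since $\phi_1$ and $\phi_2$ agree on $V(C)$, they glue to a single coloring $\phi$ of $G$.

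The main obstacle will be verifying that the glued coloring $\phi$ is genuinely a super-extension of $\phi_0$ to $G$, rather than merely a valid $(I,F)$-coloring. There are two things to check. First, $\phi$ must be an $(I,F)$-coloring of $G$: the independence of the $I$-class is immediate since it holds on each side, but one must rule out an $F$-cycle of $G$ that crosses $C$. Any such $F$-cycle would have to enter and leave the interior of $C$, and therefore would decompose into $F$-paths lying in $\intt[C]$ and $\ext[C]$ whose endpoints are on $C$; the super-extension property on each side forbids an interior $F$-path between two $C$-vertices (other than edges of $C$), so I would argue that no $F$-cycle can thread through $C$, the $F$-forest on $V(C)$ itself being acyclic because $C$ is good. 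Second, I must show $G-E(D)$ has no $F$-path joining two vertices of $D$; such a path, if it dipped inside $C$, would again produce an interior $F$-path between two $C$-vertices, contradicting the super-extension property of $\phi_2$, and otherwise it would lie entirely in $\ext[C]-E(D)$, contradicting the super-extension property of $\phi_1$. The delicate point is handling $F$-paths that touch $C$ repeatedly, so the careful bookkeeping of how paths crossing $C$ split into controlled sub-paths on each side is where I expect the real work to lie.
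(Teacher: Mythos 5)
Your proposal is correct and follows essentially the same route as the paper: super-extend $\phi_0$ to $G-\intt(C)$ by minimality, then super-extend the resulting coloring of $C$ to $\intt[C]$, and glue; the paper states this in two sentences and leaves the verification that the glued coloring is again a super-extension (which you carry out explicitly via the no-interior-$F$-path property on each side) implicit.
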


\begin{proof}
    If $C$ is a separating good cycle of $G$, then $\phi_0$ can super-extend to $G-\intt(C)$ by the minimality of $G$, and the resulting coloring of $C$ can super-extend to $\intt[C]$. This results in a super-extension of $\phi_0$ to $G$, a contradiction.
\end{proof}

The following three lemmas can be concluded easily.
\begin{lemma}\label{facial cycle}
	Every $9^-$-cycle of $G$ is facial except that an 8-cycle of $G$ might have a $(3,7)$- or $(5,5)$-chord.
\end{lemma}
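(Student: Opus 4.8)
The plan is to suppose, for contradiction, that $C$ is a non-facial $9^-$-cycle that is not an $8$-cycle with a $(3,7)$- or $(5,5)$-chord, and to derive a forbidden configuration. Since $G\in\mathcal{G}$ contains no $4$- or $6$-cycle, the only relevant lengths are $|C|\in\{3,5,7,8,9\}$, and I would split the argument according to whether $C$ has a chord.

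First I would treat the chord case. A chord splits $C$ into two cells whose lengths $c_1,c_2$ satisfy $c_1+c_2=|C|+2$ with each $c_i\ge 3$, and moreover $c_i\notin\{4,6\}$ since each cell is itself a cycle of $G$. Running through the five admissible values, this is impossible for $|C|\in\{3,5,7\}$, because every splitting of $|C|+2$ into two parts $\ge 3$ meets $4$ or $6$; for $|C|=9$ the only surviving splitting is $\{3,8\}$, which makes $C$ a special $9$-cycle and is thus excluded from $\mathcal{G}$. Only $|C|=8$ survives, with $\{c_1,c_2\}$ equal to $\{3,7\}$ or $\{5,5\}$, which is exactly the stated exception. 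Hence a chorded non-facial $9^-$-cycle is either the allowed exception or impossible.

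Next I would treat the chordless case. If $C$ is chordless and non-facial, then neither side of $C$ bounds a face, so both sides contain a vertex of $G$ off $C$ and $C$ is separating. If $C$ is good, this already contradicts Lemma \ref{lem_separating-cycle}. Otherwise $C$ is bad, so (possibly after exchanging the two sides) it carries a claw, biclaw, or triclaw in its interior, and the goal is to force a $4$-, $6$-, or special $9$-cycle. For a claw with cells $c_1,c_2,c_3$ one has $\sum c_i=|C|+6$; besides ruling out $c_i\in\{4,6\}$, one must also rule out the cycle formed by two adjacent cells, of length $c_i+c_j-2$, being $4$ or $6$. A short enumeration leaves only $(3,5,7)$- and $(5,5,5)$-claws on a $9$-cycle, the first producing a $6$-cycle and the second being a special $9$-cycle, so no claw survives; the biclaw and triclaw are then disposed of in the same style.

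The main obstacle is precisely this last step: for biclaws and triclaws the cell lengths alone do not settle the matter, and one must also inspect the cycles obtained by gluing two adjacent cells along a shared spoke or a shared triangle edge. For instance, a triclaw on an $8$-cycle whose outer cells have lengths $5,5,7$ (arcs $2,2,4$) passes the cell-length test, yet a $6$-cycle appears through two triangle edges, two spokes, and a length-$2$ arc; likewise a biclaw on a $9$-cycle with cells of lengths $3,3,5,8$ hides a $6$-cycle through the edge $uv$ and two short arcs. Carrying out this finite but fiddly check over all $|C|\in\{3,5,7,8,9\}$ is the only laborious part of the proof, whereas the chord case, which supplies the exception, is immediate arithmetic.
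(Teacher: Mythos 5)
The paper offers no written proof of this lemma (it is one of three stated to ``be concluded easily''), so the comparison is with the argument the paper evidently intends. Your chord case is correct and is surely the intended one: the arithmetic $c_1+c_2=|C|+2$ with $c_i\notin\{3\}^c\cup\{4,6\}$ and the exclusion of the special $(3,8)$-chorded $9$-cycle leaves exactly the $(3,7)$- and $(5,5)$-chorded $8$-cycles. The issue is your chordless case. What you describe as ``the main obstacle'' --- enumerating claws, biclaws and triclaws on $9^-$-cycles and checking the glued cycles --- is precisely the content of Lemma \ref{bad cycle}, which states that a bad cycle of a graph in $\mathcal{G}$ has length $11$ or $12$. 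Since that lemma concerns arbitrary members of $\mathcal{G}$ and does not depend on the minimal counterexample, there is no circularity in invoking it here, and the chordless case then closes in one line: a chordless non-facial $9^-$-cycle $C\neq D$ is separating, hence bad by Lemma \ref{lem_separating-cycle}, hence of length $11$ or $12$ by Lemma \ref{bad cycle}, contradicting $|C|\le 9$. So the ``finite but fiddly check'' you flag as laborious and leave unfinished (``disposed of in the same style'') is both the only gap in your write-up and entirely avoidable.

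Two smaller points. First, you treat ``has a chord'' and ``chordless with an interior vertex'' as exclusive, but a non-facial cycle could a priori have both; you should add that when the $8$-cycle does carry a $(3,7)$- or $(5,5)$-chord, its cells are $3$-, $5$- or $7$-cycles, which by the first part of the lemma are facial, so the interior of the $8$-cycle consists of the chord alone. Second, in your claw enumeration on a $9$-cycle the surviving cell patterns are $(3,3,9)$, $(3,5,7)$ and $(5,5,5)$, not just the latter two; the first is killed by the $4$-cycle through the two short cells, which your glued-cell test does catch, but the list as stated is incomplete.
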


\begin{lemma}\label{bad cycle}
	Let $H\in \cal{G}$. If $C$ is a bad cycle of $H$, then $C$ has length either 11 or 12. Furthermore, if $|C|=11$, then $C$ has a $(3,7,7)$- or $(5,5,7)$-claw; if $|C|=12$, then $C$ has a $(5,5,8)$-claw, a $(3,7,5,7)$- or $(5,5,5,7)$-biclaw, or a $(3,7,7,7)$-triclaw.
\end{lemma}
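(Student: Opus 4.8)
The plan is to regard a bad cycle $C$ together with its embedded claw, biclaw, or triclaw as a small plane graph and to extract from it every cycle it contains; each such cycle belongs to $H$, hence must avoid lengths $4$ and $6$ and must not be a special $9$-cycle. First I would record the cell-length identities by a double count. Each edge of $C$ lies on exactly one cell, whereas each internal edge — a spoke, the edge $uv$ of a biclaw, or a triangle edge of a triclaw — lies on exactly two cells; so the cell lengths sum to $|C|+2e$, where $e$ is the number of internal edges. This yields $|C|+6$ for a claw ($e=3$), $|C|+10$ for a biclaw ($e=5$), and $|C|+12$ for a triclaw ($e=6$). Since $|C|\le 12$ by definition of a bad cycle, each sum is bounded, so only finitely many multisets of cell lengths are possible.

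Next I would set up the constraints. Every cell is a cycle of $H$, so it has length at least $3$ and is neither $4$ nor $6$; in particular the two middle cells of a biclaw and the three outer cells of a triclaw, each equal to an arc length plus $3$, have length at least $5$. Besides the cells, three further families of cycles occur: the union of two cells sharing an internal edge (length $c_i+c_j-2$); a cycle through a single internal vertex (an arc of $C$ closed by a length-$2$ path such as $u_1\,u\,u_2$); and, for a biclaw, a cycle through both internal vertices (an arc of $C$ closed by the length-$3$ path $u_i\,u\,v\,v_j$). Requiring that none of these has length $4$ or $6$ already prunes most candidates. The specialness of a $9$-cycle enters through two mechanisms: (A) a shared internal edge becomes a chord of a union $9$-cycle splitting it into a $3$-cell and an $8$-cell, i.e.\ a $(3,8)$-chord; and (B) an internal vertex becomes the centre of a claw of a $9$-cycle whose three cells are all $5$, i.e.\ a $(5,5,5)$-claw. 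Each forces a special $9$-cycle, forbidden in $\mathcal{G}$.

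I would then run the finite enumeration, type by type. For a claw, the union constraint forbids the cell pairs $\{3,3\}$ and $\{3,5\}$ (whose unions are a $4$- and a $6$-cycle) and $\{3,8\}$ (mechanism (A)), while $(5,5,5)$ is itself a special $9$-cycle; among triples with cell-sum at most $18$ the survivors are $(3,7,7)$ and $(5,5,7)$ with $|C|=11$, and $(5,5,8)$ with $|C|=12$. For a triclaw, pairing the central triangle with an outer cell shows that an outer cell of length $5$ would create a $6$-cycle and one of length $8$ a special $9$-cycle (mechanism (A)); hence every outer cell is at least $7$, forcing cell-sum at least $24$, so $|C|=12$ with outer cells $(7,7,7)$, the $(3,7,7,7)$-triclaw. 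The biclaw is the heavy case: combining the adjacent-union lengths with the cycles through both internal vertices eliminates every candidate with $|C|\le 11$ (for example $(3,3,7,7)$ at $|C|=10$ passes all union tests, yet the detour path $u_i\,u\,v\,v_j$ produces a $9$-cycle carrying a $(3,8)$-chord), and for $|C|=12$ it leaves exactly the multisets $\{3,5,7,7\}$ and $\{5,5,5,7\}$. A short check that each admits an arc-assignment free of forbidden sub-cycles — placing the $7$'s as middle cells so that no $6$-cycle and no $(5,5,5)$-claw appears — confirms that these occur, as the $(3,7,5,7)$- and $(5,5,5,7)$-biclaws.

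The main obstacle is exactly the biclaw enumeration. Testing unions of adjacent cells, which suffices for claws and triclaws, is not enough here; the genuinely new feature is the detour $9$-cycle through both internal vertices, and noticing that its inherited chord is a $(3,8)$-chord is what kills the borderline families such as $(3,3,7,7)$ and $(3,3,7,9)$. One must also check realizability, namely that the two surviving $12$-cycle multisets admit a consistent placement of middle and side cells with no short or special sub-cycle, since a careless arc-assignment — for instance forcing three $5$-cells to meet at one internal vertex — would reintroduce a forbidden $(5,5,5)$-claw. Once these two points are dealt with, the remaining bookkeeping is routine, which is why the lemma may be claimed to follow easily.
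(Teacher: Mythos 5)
The paper offers no proof of this lemma at all --- it is one of three statements introduced with ``can be concluded easily'' --- so there is no argument of the authors to compare yours against; your enumeration supplies the missing proof and, as far as I can check, it is correct and is surely what the authors intended. You correctly identify the complete list of cycles in the subgraph $C\cup H$ ($H$ the claw/biclaw/triclaw): the cells, the unions of adjacent cells, the cycles through one internal vertex (which for the biclaw carry an induced claw with cells $(A,M_1,M_2)$), and the two detour cycles through both internal vertices of a biclaw (which carry induced chords); and you correctly isolate the two ways a special $9$-cycle can arise. I verified the case analysis: the claw survivors are exactly $(3,7,7)$, $(5,5,7)$ at length $11$ and $(5,5,8)$ at length $12$; the triclaw forces all three outer cells to equal $7$; and for the biclaw the configurations $(3,7,3,7)$ at $|C|=10$ and $(3,7,3,9)$ at $|C|=12$ are killed only by the detour $9$-cycle with its $(3,8)$-chord, while $(5,5,5,5)$ and any $(5,5,5,7)$ with the $7$ as an end cell are killed by the $(5,5,5)$-claw mechanism, leaving exactly $(3,7,5,7)$ and $(5,5,5,7)$ (with the $7$ a middle cell). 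One small point to tighten: your assertion that the middle cells of a biclaw and the outer cells of a triclaw ``have length at least $5$'' silently assumes the relevant attachment vertices on $C$ are distinct; if two of them coincide, such a cell degenerates to a triangle. This does not change the outcome --- a length-$3$ outer cell of a triclaw unions with the central triangle into a $4$-cycle, and a length-$3$ middle cell of a biclaw forces the other three cells to be $\geq 7$ and $\neq 8$, pushing the cell sum past $|C|+10\leq 22$ --- but the degenerate case should be dispatched explicitly rather than excluded by fiat.
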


\begin{lemma}\label{bad cycle 1}
	Every bad cycle $C$ of $G$ is adjacent to at most one triangle. Furthermore, if  $C$ is ext-triangular, then  $C$ has a $(5,5,7)$-claw or  $(5,5,5,7)$-biclaw.
\end{lemma}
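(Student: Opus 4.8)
The plan is to reduce to the finitely many structures furnished by Lemma~\ref{bad cycle} and then exploit the forbidden lengths $4,6$ together with the ban on special $9$-cycles through a single rerouting device. First I would record two consequences of Lemma~\ref{facial cycle}: every triangle is facial, and every $3$-, $5$-, or $7$-cell of $C$ bounds a face (such a cell is a $9^-$-cycle that is not an $8$-cycle, hence facial). Thus an interior triangle sharing an edge with $C$ cannot sit inside a $5$- or $7$-cell; it must either be a $3$-cell of the defining claw/biclaw/triclaw, or---only in the $(5,5,8)$-claw---the triangle cut off by a $(3,7)$-chord of the lone $8$-cell. Since none of the six types has two $3$-cells and an $8$-cycle admits at most one chord, $C$ bounds at most one interior triangle.

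The key tool I would isolate is a rerouting lemma for an exterior triangle. Suppose $T=xyz$ is a triangle of $\ext[C]$ meeting $C$ exactly in the edge $xy$, and that $xy$ lies on a $c$-cell $Q$. Replacing $xy$ by the path $x\,z\,y$ turns $Q$ into a cycle $Q'$ of length $c+1$ for which $xy$ is an interior chord cutting off the triangle $xzy$; that is, $Q'$ carries a $(3,c)$-chord. As $G$ has no $4$- or $6$-cycle, $c\neq 3$ and $c\neq 5$; as $G$ has no special $9$-cycle and a $(3,8)$-chord renders a $9$-cycle special, $c\neq 8$. Therefore $xy$ must lie on a $7$-cell, and in particular an ext-triangular $C$ must possess a $7$-cell.

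For the first assertion I would argue that two triangles adjacent to $C$ always produce a special $9$-cycle. If both are exterior they lie on $7$-cells by the rerouting lemma; when they lie on the same $7$-cell, rerouting both simultaneously yields a $9$-cycle in which the base edge of one of them is a $(3,8)$-chord. If one triangle is interior (a $3$-cell, or the $(3,7)$-chord triangle of an $8$-cell) and the other exterior, I would trace the $9$-cycle formed by the base edge of the interior triangle, the rerouted $7$-cell, and the two structure edges ending at the apex; then one structure edge of the interior triangle (a claw edge, resp.\ the $8$-cell chord) is a $(3,8)$-chord of that $9$-cycle. Finally, two exterior triangles on \emph{different} $7$-cells can occur only in the three types that also contain a $3$-cell, so that situation reduces to the interior-plus-exterior case. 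In every case we reach a special $9$-cycle, contradicting $G\in\mathcal G$; hence $C$ is adjacent to at most one triangle.

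The second assertion then falls out. If $C$ is ext-triangular it has a $7$-cell, which already rules out the $(5,5,8)$-claw unless its $8$-cell is chorded; but a chorded $8$-cell supplies an interior triangle, and together with the exterior one this gives two triangles, excluded by the first assertion. The three types carrying a genuine $3$-cell---the $(3,7,7)$-claw, the $(3,7,5,7)$-biclaw, and the $(3,7,7,7)$-triclaw---already own one interior triangle, so an exterior triangle again yields two, again excluded. The only survivors are the $(5,5,7)$-claw and the $(5,5,5,7)$-biclaw, as claimed. I expect the main obstacle to be the bookkeeping around the $(5,5,8)$-claw: its $8$-cell is the unique cell permitted to be non-facial (carrying a $(3,7)$- or $(5,5)$-chord by Lemma~\ref{facial cycle}), and in the chorded case rerouting alone only produces an allowable $(3,7)$-chorded $8$-cycle, so one must invoke the pairing with the interior triangle and check the placement of the exterior edge relative to the chord to be certain the traced cycle is a genuine special $9$-cycle.
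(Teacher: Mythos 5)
The paper states this lemma without proof (it is one of the three lemmas said to be ``concluded easily''), so your argument has to stand on its own. Your rerouting device --- replace the base edge $xy$ of an exterior triangle $xyz$ by the path $xzy$, so that a $c$-cell becomes a $(c+1)$-cycle carrying $xy$ as a $(3,c)$-chord, forcing $c\notin\{3,5,8\}$ --- is exactly the right tool and carries most of the lemma. However, there is a genuine gap in your treatment of the $(3,7,7,7)$-triclaw. Its $3$-cell is the central triangle $uvw$ itself, whose vertices all lie in $\intt(C)$; it shares \emph{no} edge with $C$ and has no ``base edge'' on $C$. Hence it is not an interior triangle adjacent to $C$, and two of your steps fail as written for this type: (i) the reduction of ``two exterior triangles on different $7$-cells'' to the interior-plus-exterior case, and (ii) the exclusion of the triclaw in the second assertion by appeal to the first assertion (``already owns one interior triangle, so an exterior triangle yields two''). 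In both places the configuration at hand is a single triangle adjacent to $C$ plus a triangle buried inside $C$, which the first assertion does not forbid, and your traced $9$-cycle is built from a base edge that does not exist. The repair is in the spirit of your own construction: each $7$-cell of the triclaw has exactly one of the edges $uv,vw,uw$ on its boundary, so after rerouting the exterior triangle into an $8$-cycle you open that shared edge through the third triclaw vertex and obtain a $9$-cycle in which the shared edge is a $(3,8)$-chord, i.e.\ a special $9$-cycle. This shows a $(3,7,7,7)$-triclaw admits no exterior triangle at all, which is what both assertions require.

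Two smaller points. When two exterior triangles sit on the same $7$-cell, your simultaneous rerouting produces a genuine $9$-cycle only if their apexes are distinct; a common apex must be excluded separately (for every placement of the two base edges on the arc it forces a $4$-cycle, or a $5$-cycle sharing an edge with one of the triangles and hence a $6$-cycle). And your worry about the chorded $8$-cell of the $(5,5,8)$-claw is unnecessary: rerouting the full $8$-cell always yields a $9$-cycle with the base edge as a $(3,8)$-chord, so that claw admits no exterior triangle whether or not its $8$-cell carries a chord, and no pairing with an interior triangle is needed.
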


\begin{lemma}\label{lem_2connected}
	$G$ is 2-connected.
\end{lemma}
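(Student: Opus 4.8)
The plan is to prove that $G$ is 2-connected, given that $G$ is a minimal counterexample to the super-extended theorem. I would argue by contradiction, so suppose $G$ is not 2-connected. Since $G$ is connected (as $G \in \mathcal{G}$), if it fails to be 2-connected then either $G$ has a cut vertex, or $G$ has fewer than three vertices. The small-order cases are easily dismissed: a graph on one or two vertices has no cycle to serve as a good boundary $D$, or the extension is trivial. So the substance is the cut-vertex case, and I would let $v$ be a cut vertex of $G$.

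The key idea is to split $G$ along $v$ into two smaller subgraphs and super-extend on each, then glue the colorings back together at $v$. Concretely, write $G = G_1 \cup G_2$ where $G_1 \cap G_2 = \{v\}$ and each $G_i$ has strictly fewer edges (and no more vertices) than $G$; since $v$ is a cut vertex, such a decomposition exists with both $G_i$ being proper. The main subtlety is to arrange the boundary cycles correctly: I must ensure that each $G_i$ inherits a good cycle as the boundary of its unbounded face so that Theorem \ref{thm_main_extension} applies to it by minimality. Because $D$ is a cycle and $v$ lies on it (a cut vertex of a plane graph whose outer boundary is a cycle must, in a $2$-connected-failing situation, interact with $D$), I would need to track how $D$ distributes across $G_1$ and $G_2$. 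The cleanest route is to observe that each $G_i \in \mathcal{G}$, since subgraphs of graphs in $\mathcal{G}$ omit the same forbidden cycles, and then verify that the outer boundary of each $G_i$ remains a good cycle.

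The gluing step is where I would spend the most care. Starting from the given $(I,F)$-coloring $\phi_0$ of $G[V(D)]$, I would super-extend to $G_1$ by minimality, obtaining a coloring whose restriction to $V(D) \cap V(G_1)$ agrees with $\phi_0$; in particular this fixes the color of $v$. I would then feed that color of $v$ into the pre-coloring for $G_2$ and super-extend to $G_2$ by minimality as well. Combining the two colorings along the shared vertex $v$ yields an $(I,F)$-coloring of all of $G$ whose restriction to $G[V(D)]$ is $\phi_0$. The point of the super-extension definition — that $G - E(H)$ contains no $F$-path joining two vertices of $H$ — is that any $F$-path in $G$ must pass through the cut vertex $v$ to travel between the two sides, and an $F$-path through a single vertex $v$ decomposes into $F$-paths living entirely in $G_1$ or entirely in $G_2$, each of which is controlled by the respective super-extension. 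Likewise an $F$-cycle through $G$ would have to cross $v$, which a cut vertex forbids for a cycle.

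The main obstacle I anticipate is the bookkeeping of the outer boundary: I must confirm that after cutting at $v$, the boundary of each piece is genuinely a \emph{good} cycle (no claw, biclaw, or triclaw) so that the inductive hypothesis is legitimately available. If the cut naturally produces a boundary that is not a single cycle, I would instead argue that $v$ being a cut vertex contradicts the fact that $D$ is a cycle together with Lemma \ref{lem_separating-cycle}; indeed, a cut vertex typically forces a separating structure, and one can often derive a separating good cycle or a repeated vertex on $D$ that contradicts $D$ being a (simple) cycle. I would therefore keep both lines of attack ready: the direct split-and-glue when the pieces have good boundaries, and a shorter contradiction via the absence of separating good cycles when they do not.
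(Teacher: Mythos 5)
Your overall strategy (split at the cut vertex $v$, super-extend on the piece containing $D$, transfer the color of $v$ to the other piece, and glue, observing that $F$-paths and $F$-cycles cannot cross a cut vertex) is the same as the paper's, and your gluing discussion is sound. But there is a genuine gap at exactly the step you flag as the ``main obstacle'': applying minimality to the piece that meets the rest of $G$ only in $v$. Theorem \ref{thm_main_extension} requires the boundary of the unbounded face to be a \emph{good cycle} with an $(I,F)$-coloring prescribed on the \emph{whole} of that cycle; for the dangling piece you only have a color on the single vertex $v$, and its outer boundary need not be a cycle at all, need not be good, and is in any case not precolored. Note also that $D$ does not ``distribute across $G_1$ and $G_2$'': being a cycle, $D$ lies entirely inside one block, and the cut vertex may perfectly well be an internal vertex with a block hanging into an inner face, so your parenthetical claim that $v$ must interact with $D$ is unfounded. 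Your fallback---deriving a contradiction from Lemma \ref{lem_separating-cycle}---does not close the gap either, since a cut vertex does not by itself produce a separating good cycle.

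The paper resolves precisely this point with an extra device that your proposal lacks. Taking $B$ to be a block with cut vertex $v$, it first super-extends $\phi_0$ to $G-V(B-v)$ by minimality, which fixes the color of $v$. Then, working inside $B$: if $v$ lies on some $10^-$-cycle, the outermost such cycle $C$ is good by Lemma \ref{bad cycle} (bad cycles have length $11$ or $12$), so the color of $v$ extends to an $(I,F)$-coloring of $C$, which super-extends to both $\intt[C]$ and $\ext[C]$ in $B$ by minimality; if $v$ lies on no $10^-$-cycle, one inserts an edge $e$ between the two neighbours of $v$ on the unbounded face of $B$, creating a triangle $T$, checks that $B+e$ re-embedded with $T$ as the unbounded face still belongs to $\mathcal{G}$, and applies minimality to $B+e$. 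Without some such construction manufacturing a precolored good boundary cycle in the dangling block, the induction hypothesis cannot be invoked there, and your argument as written does not go through.
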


\begin{proof}
	Otherwise, we may assume that $G$ has a block $B$ and a cut vertex $v\in V(B)$.
	By the minimality of $G$, $\phi_0$ can super-extend to $G-V(B-v)$.
	Consider only $B$. If $v$ is contained in $10^-$-cycles, then take the outermost one, that is, the one which is contained in the interior of no other $10^-$-cycles, denoted by $C$.
	Lemma \ref{bad cycle} implies that $C$ is good and therefore, the coloring of $v$ can extend to an $(I,F)$-coloring of $C$ which can further super-extend to both the interior and exterior (if not empty) of $C$ in $B$ by the minimality of $G$. This results in an $(I,F)$-coloring of $B$.
	It remains to assume that $v$ is contained in no $10^-$-cycles.
	Insert into the unbounded face $f$ of $B$ an edge $e$ between the two neighbors of $v$ on $f$, creating a 3-face, say $T$. Note that the embedding of $B+e$ in the plane which takes $T$ as the unbouded face belongs to $\mathcal{G}$. Similarly, the coloring of $v$ can extend to an $(I,F)$-coloring of $T$ and can further super-extend to $B+e$.
	In either case, the resulting coloring of $G$ is a super-extension of $\phi_0$, a contradiction. 
\end{proof}

\begin{lemma}\label{splitting path}
    Let $P$ be a splitting path of $D$, which divides $D$ into two cycles $D'$ and $D''$.
    If $2\leq |P| \leq 5$, then at least one of $D'$ and $D''$ has length $|P|+1$ to $2|P|-1$. 
    More precisely, since $G\in \mathcal{G}$,
    \begin{enumerate}[(1)]
    	\setlength{\itemsep}{0pt} 
    	\item if $|P|=2$, then at least one of $D'$ and $D''$ is a triangle;
    	\item if $|P|=3$, then at least one of $D'$ and $D''$ is a 5-cycle;
    	\item if $|P|=4$, then at least one of $D'$ and $D''$ is a 5- or 7-cycle;
    	\item if $|P|=5$, then at least one of $D'$ and $D''$ is a 7-, 8-, or 9-cycle.
    \end{enumerate}
\end{lemma}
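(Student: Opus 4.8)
The plan is to reduce the statement to arithmetic on the lengths of the two cells $D'=A_1\cup P$ and $D''=A_2\cup P$, where $A_1,A_2$ are the two arcs into which the endpoints of $P$ cut $D$, and then to pin these lengths down by forcing one of the cells to be a \emph{bad} cycle. Writing $\ell=|P|$, the path $P$ contributes $\ell-1$ edges to each cell, so $|D'|=|A_1|+\ell-1$ and $|D''|=|A_2|+\ell-1$, whence $|D'|+|D''|=|D|+2\ell-2$. Since $D$ is a good cycle we have $|D|\le 12$. Assuming the endpoints of $P$ are non-adjacent on $D$ (automatic when $P$ is a chord, and to be treated separately otherwise), each arc has at least two edges, so both cells already satisfy $|D'|,|D''|\ge\ell+1$; it therefore remains only to produce one cell of length at most $2\ell-1$.

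The crux is to show that $D'$ and $D''$ cannot both be good. Here I would invoke the structural lemmas already in hand: a good cell with non-empty interior would be a separating good cycle, contradicting Lemma~\ref{lem_separating-cycle}, so every good cell bounds a face. If both cells were facial, then nothing would lie inside $D$ except $P$ itself, i.e.\ $G=D\cup P$; then either $P$ has an interior vertex, which has degree $2$ and contradicts Lemma~\ref{lem_min degree}, or $P$ is a chord, in which case $G=G[V(D)]$ and $\phi_0$ super-extends to $G$ trivially, contradicting the choice of $G$. In every case at least one cell is bad.

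By Lemma~\ref{bad cycle} a bad cell has length $11$ or $12$, and both cells cannot be bad, since that would force $|D'|+|D''|\ge 22$ while $|D|+2\ell-2\le 12+2\cdot5-2=20$. Thus exactly one cell is bad; calling it $B$ and the other cell $C$, the relation $|C|=(|D|+2\ell-2)-|B|\le(12+2\ell-2)-11=2\ell-1$ together with $|C|\ge\ell+1$ places $C$ in the interval $[\ell+1,2\ell-1]$, which is the first assertion. The four refined cases then follow by deleting the forbidden lengths from this interval: for $\ell=3$ the value $4$ is excluded, for $\ell=4$ and $\ell=5$ the value $6$ is excluded, and a $9$-cell is automatically non-special, all because $G\in\mathcal{G}$.

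I expect the structural step—that the two cells cannot both be good—to be the main obstacle, and in particular the careful handling of its degenerate sub-cases: chords of short cells, where Lemma~\ref{facial cycle} must be used to rule out further interior edges, and the possibility that the endpoints of $P$ are adjacent on $D$, which would create a cell of length only $\ell$ and thereby break the clean lower bound $|C|\ge\ell+1$. Controlling these cases, rather than the length arithmetic, is where I anticipate the real work lies.
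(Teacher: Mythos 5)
Your reduction stumbles on an off-by-two count that the rest of the argument cannot absorb. In this paper $|P|$ denotes the number of edges of $P$ (in the paper's own proof, case $|P|=2$ is $P=xyz$), so each cell contains all $|P|$ edges of $P$ and $|D'|+|D''|=|D|+2|P|\le 12+2|P|$, not $|D|+2|P|-2$. With the correct count your final estimate becomes $|C|\le (12+2|P|)-11=2|P|+1$, which is too weak in every case: for $|P|=2$ it permits a 5-cell where the lemma demands a triangle, for $|P|=3$ it permits a 7-cell where the lemma demands a 5-cell, and so on. Likewise your claim that the two cells cannot both be bad fails for $|P|=5$ and $|D|=12$: two bad 11-cells are arithmetically possible, and the paper must explicitly dispose of exactly this configuration (it leads to a special 9-cycle with a $(3,8)$-chord).

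The deeper problem is that no purely arithmetic argument closes the gap between $2|P|+1$ and $2|P|-1$. The paper's proof assumes $|D'|,|D''|\ge 2|P|$ and then exploits Lemma \ref{lem_min degree}: each internal vertex of $P$ has a third neighbor, and wherever that neighbor sits a contradiction arises --- if it lies on $D$, then $D$ acquires a claw, biclaw, or triclaw, contradicting that $D$ is good; if it lies inside a cell, that cell is separating, hence bad, and its forced claw/biclaw structure from Lemma \ref{bad cycle} either transfers a claw, biclaw, or triclaw to $D$ or produces a 4-cycle, a 6-cycle, or a special 9-cycle, all forbidden. Your observation that at least one cell must be bad (via Lemmas \ref{lem_separating-cycle} and \ref{lem_min degree}) is essentially the paper's first step, but the hypothesis that $D$ itself carries no claw, biclaw, or triclaw --- which you never use beyond $|D|\le 12$ --- is what drives the remaining, genuinely case-by-case, part of the proof and cannot be dispensed with.
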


\begin{proof}
	Suppose to the contrary that $|D'|, |D''| \geq 2|P|$.
	Since $D$ has length at most 12, $|D'|+|D''|=|D|+2|P|\leq 12+2|P|$.
	It follows that $2|P|\leq |D'|, |D''| \leq 12.$
	
	(1) Let $P=xyz$. By Lemma \ref{lem_min degree}, $y$ has a neighbor $y'$ other than $x$ and $z$.
	If $y'$ is external, then $D$ has a claw, a contradiction.
	So, $y'$ lies inside $D'$ or $D''$, w.l.o.g., say $D'$. By Lemma \ref{lem_separating-cycle}, $D'$ is a bad cycle. Moreover, since $G$ has no 4-cycles, $5\leq |D'|, |D''| \leq 11.$ Hence by Lemma \ref{bad cycle}, $D'$ has a claw, which yields that $D$ has a biclaw, a contradiction.
	
	(2) Let $P=wxyz$. We may let $x'$ and $y'$ be neighbors of $x$ and $y$ with $\{xx',yy'\}\cap E(P)=\emptyset$, respectively.
    If both $x'$ and $y'$ are external, then $D$ has a biclaw, a contradiction.
    So, w.l.o.g., let $x'$ lie inside $D'$.  
    Moreover, since $G$ has no 6-cycles, $7\leq |D'|, |D''| \leq 11.$
    Hence by Lemmas \ref{lem_separating-cycle} and \ref{bad cycle}, $D'$ is a bad 11-cycle with a claw and $D''$ is a 7-face.
    So, $y'$ has no choices but coincides with $x'$. Now, $D$ has a triclaw, a contradiction.
	
	(3) Let $P=vwxyz$. In this case, $8\leq |D'|, |D''| \leq 12$.
	We claim that $G$ has no edge connecting two non-consecutive vertices on $P$. Otherwise, such an edge $e$ together with $P$ forms a triangle as well as a splitting 3-path of $D$. By the statement (2), we can deduce that $e$ is a (3,5)-chord of $D'$, a contradiction.
	
	Let $w'$, $x'$, and $y'$ be neighbors of $w$, $x$, and $y$ with $\{ww',xx',yy'\}\cap E(P)=\emptyset$, respectively.	
	Clearly, $x'$ lies in $\intt[D']$ or $\intt[D'']$, w.l.o.g., say $\intt[D']$.
	If $x'$ is external, then both the paths $vwxx'$ and $x'xyz$ are splitting 3-paths of $D$. By the statement (2), $D'$ is an 8-cycle with a (5,5)-chord $xx'$.
	Hence, $y'$ has no choice for its location but to lie inside $D''$, and so does $w'$. So, $D''$ is a bad cycle and by Lemma \ref{bad cycle}, either $w'=y'$ which yields a 4-cycle or $w'y'\in E(G)$ which yields a special 9-cycle with a $(5,5,5)$-claw, a contradiction. 
	It remains to assume that $x'\in \intt(D')$.
	Thus, $D'$ is a bad cycle, which implies that $D''$ has length 8 or 9.
	For $|D''|=9$, $D''$ is facial and $D'$ is a bad 11-cycle with a claw, which is impossible because of the locations of $w'$, $x'$ and $y'$. For $|D''|=8$, at least one of $w'$ and $y'$ lies in $\intt[D']$, which together with $x'$ yields either a 4-cycle or a special 9-cycle with a $(3,8)$-chord, a contradiction.
	
	(4) ~Let $P=uvwxyz$. In this case, $10\leq |D'|, |D''| \leq 12$.
	By a similar argument as in the case (3), one can conclude that $G$ has no edge connecting two nonconsecutive vertices on $P$.
	Let $v', w', x', y'$ be neighbors of $v, w, x, y$ not on $P$, respectively.
	
	We claim that both $w'$ and $x'$ are internal. Otherwise, let $w'\in V(D')$. Since both $uvww'$ and $w'wxyz$ are splitting paths of $D$, $D'$ is a 10-cycle with a (5,7)-chord $ww'$. If $x'\in V(D'')$, then similarly, $D''$ is a 10-cycle with a (5,7)-chord $xx'$, which yields no locations for $v'$ and $y'$. Hence, $x'\in \intt(D'')$. Moreover, $v'\in \intt(D'')$ since otherwise, $uvv'$ is a splitting 2-path of $D$ which yields a triangle adjacent to a 5-cycle.  Therefore, $v'x'\in E(G)$ and $D''$ is a bad 12-cycle with a biclaw, which yields no location for $y'$.
	
	If $w'$ and $x'$ lie inside different one between $D'$ and $D''$, then both $D'$ and $D''$ are bad 11-cycles with a claw, yielding $v'=w'$ and $y'=x'$. Now, $G$ has a special 9-cycle with a (3,8)-chord.
	Otherwise, let $w',x'\in \intt(D')$. Since $G$ has no 4-cycles, $x'=w'$ and hence, $D'$ is a bad cycle with either a $(3,7,7)$-claw or a (3,7,5,7)-biclaw.
	If $v'\in V(D'')$, then $uvv'$ is a splitting 2-path of $D$, forming a (3,8)-chord $uv$.
	Hence, $v'\in \intt(D'')$ and similarly, $y'\in \intt(D'')$.
	It follows that either $v'= y'$ or $v'y'\in E(G)$, yielding a 6-cycle in both cases.
\end{proof}

\begin{lemma} \label{operation}
	Let $G'$ be a plane graph obtained from $G$ by deleting a nonempty set of internal vertices and either identifying two vertices without identifying edges or
	adding an edge.
	If we 
	\begin{enumerate} [(a)]
		\setlength{\itemsep}{0pt} 
		\item identify no two vertices on $D$, and create no edge connecting two vertices on $D$, and
		\item create neither $6^-$-cycle nor ext-triangular 7- or 8-cycle,
	\end{enumerate}
	then $\phi_0$ can super-extend to $G'$.
\end{lemma}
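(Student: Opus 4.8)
The plan is to run the minimality of the counterexample $G$. Since $G'$ arises from $G$ by deleting a nonempty set $S$ of internal vertices and then making a single identification or edge-addition, I first note $|V(G')|+|E(G')|<|V(G)|+|E(G)|$: by Lemma~\ref{lem_min degree} every vertex of $S$ has degree at least $3$, so deleting one vertex of $S$ already lowers $|V|+|E|$ by at least $4$, whereas adding an edge raises it by only $1$ and an identification never raises it. Thus $G'$ is strictly smaller than $G$ in this measure. Consequently it suffices to show that $G'$ is again an instance to which Theorem~\ref{thm_main_extension} applies, namely that $G'\in\mathcal{G}$, that the boundary of its unbounded face is a good cycle, and that $\phi_0$ is still an $(I,F)$-coloring of $G'[V(D)]$. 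Granting this, minimality forces $\phi_0$ to super-extend to $G'$, which is the assertion.

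Hypothesis~(a) secures the boundary. As only internal vertices are deleted, no two vertices of $D$ are identified, and no edge joins two vertices of $D$, the cycle $D$ survives verbatim as the boundary of the unbounded face of $G'$ and hence stays a good cycle; moreover $G'[V(D)]$ has exactly the edge set of $G[V(D)]$ — an internal vertex identified with a vertex of $D$ cannot contribute a chord of $D$, since that would be a forbidden $D$–$D$ edge — so $\phi_0$ remains a valid coloring there. What remains, and what I expect to be the crux, is the membership $G'\in\mathcal{G}$.

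For this I use hypothesis~(b). The absence of $4$- and $6$-cycles is immediate, since deletion creates no cycle while the ``no $6^-$-cycle'' clause forbids every new short cycle, and connectivity of $G'$ is checked directly. The delicate point is excluding a \emph{newly created} special $9$-cycle $C$, which a short-cycle condition alone cannot rule out. The key observation is that $C$, taken with its $(3,8)$-chord or $(5,5,5)$-claw, must contain a newly created cell: in the edge-addition case the single new edge lies on some cell, which is then new; in the identification case a genuinely new configuration must route through the merged vertex $w$ using both identified neighbourhoods, whereupon two cyclically consecutive configuration-edges at $w$ have opposite origins and so bound a common new cell. This suffices precisely because (b) constrains the cycles of $G'$ directly, regardless of how they were formed. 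Now if $C$ carries a $(5,5,5)$-claw, every cell is a $5$-cell, so the new cell is a new $5$-cycle, contradicting the ``no $6^-$-cycle'' clause. If $C$ carries a $(3,8)$-chord, the new cell is either its $3$-cell, hence a new triangle, or its $8$-cell $Q$; but $Q$ shares the chord with the triangle, which lies in $\ext[Q]$, so $Q$ is a new ext-triangular $8$-cycle, again contradicting~(b). Hence no new special $9$-cycle can occur, giving $G'\in\mathcal{G}$; combined with the first two paragraphs, the minimality of $G$ then delivers the desired super-extension of $\phi_0$ to $G'$. The step I expect to fight hardest with is exactly this cell-by-cell bookkeeping for $C$ — in particular verifying the ``common new cell'' claim at the merged vertex in the identification case — rather than the size count or the boundary check, both of which are routine.
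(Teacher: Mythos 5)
Your overall strategy matches the paper's: shrink the measure, check that $G'$ still satisfies the hypotheses of Theorem~\ref{thm_main_extension}, and invoke minimality; your cell-by-cell analysis of a newly created special $9$-cycle is essentially the paper's argument. But there is a genuine gap in the boundary paragraph. You assert that since $D$ ``survives verbatim'' it ``hence stays a good cycle.'' That inference is false: goodness of $D$ is not a property of the cycle $D$ alone but of $D$ together with its interior --- it fails precisely when some internal vertices form a claw, biclaw, or triclaw of $D$. The operation can create such a structure without touching $E(D)$ or identifying/joining vertices of $D$: for instance, identifying an internal vertex with two neighbours on $D$ with another internal vertex having one neighbour on $D$ produces a vertex with three neighbours on $D$, i.e.\ a claw of $D$; adding an edge between two internal vertices each with two neighbours on $D$ produces a biclaw. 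Since ``$D$ is a good cycle'' is an explicit hypothesis of Theorem~\ref{thm_main_extension}, you cannot invoke minimality for $G'$ without ruling this out.

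The repair is exactly the tool you already deploy for special $9$-cycles, and it is how the paper proceeds: if $D$ acquires a claw, biclaw, or triclaw $H$ in $G'$, then (because no edges are identified) the new vertex or new edge is incident with two newly created cells of $H$; by Lemma~\ref{bad cycle} the cells of a bad $11$- or $12$-cycle have lengths in $\{3,5,7,8\}$ and every $7$- or $8$-cell in the relevant configurations shares an edge with a $3$-cell, so a new $6^-$-cycle or a new ext-triangular $7$-cycle is created, contradicting~(b). (This is also why (b) forbids ext-triangular $7$-cycles and not only $8$-cycles --- a clause your writeup never uses, which is a symptom of the missing case.) Your explicit verification that the measure $|V|+|E|$ strictly decreases is a point the paper leaves implicit, and is welcome; the rest of your argument is sound.
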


\begin{proof}
The item $(a)$ guarantees that $D$ is unchanged and bounds $G'$ and that $\phi_0$ is an $(I,F)$-coloring of $G'$.
By the item $(b)$, $G'$ is simple and $G'$ contains no 4- or 6-cycles.
Hence, to super-extend $\phi_0$ to $G'$ by the minimality of $G$, it suffices to show both that $D$ is a good cycle in $G'$ and that $G'$ contains no special 9-cycles.

Suppose to the contrary that $D$ is a bad cycle of $G'$, i.e., $D$ has a claw, biclaw, or triclaw, say $H$. 
For the case of identifying two vertices, the resulting vertex is incident with $k$ ($k\leq 2$) cells of $H$ that are created by the operation. If $k= 0$, then $D$ has $H$ also in $G$, a contradiction. Moreover, since the operation does not identify edges, $k\neq 1.$ Therefore, $k= 2$.  
It follows by Lemma \ref{bad cycle} that there is a $5^-$-cycle or an ext-triangular 7-cycle created, contradicting the item $(b)$. 
For the case of inserting a new edge, say $e$, we can similarly deduce that both cells of $H$ incident with $e$ are created, yielding a similar contradiction as above.

Suppose to the contrary that $G'$ contains a special 9-cycle $C$.  
By a similar argument on $C$ as on $D$ above, we can deduce that there is a $5^-$-cycle or an ext-triangular 8-cycle created, contradicting the item $(b)$. 	
\end{proof}

\begin{lemma}\label{operation_merge_edge}
	Let $G'$ be a plane graph obtained from $G$ by the following operation $T$: deleting a nonempty set $S$ of internal vertices and identifying two edges $u_1u_2$ and $v_1v_2$ so that $u_1$ is identified with $v_1$.
	For $i\in \{1,2\}$, let $T_i$ denote the operation on $G$ that consists of deleting all the vertices of $S$ and identifying $u_i$ and $v_i$.
	If at least one of $u_1u_2$ and $v_1v_2$ is contained in no $8^-$-cycle of $G-S$, and the conditions $(a)$ and $(b)$ of Lemma \ref{operation} hold for both $T_1$ and $T_2$,
	then $\phi_0$ can super-extend to $G'$.
\end{lemma}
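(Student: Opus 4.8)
The plan is to show that $G'$ meets the conditions under which the minimality of $G$ forces a super-extension: that $G'$ is a simple plane graph in $\mathcal{G}$ whose outer boundary is still the good cycle $D$ and on which $\phi_0$ is an $(I,F)$-coloring. Write $w_1$ for the vertex $u_1=v_1$, write $w_2$ for $u_2=v_2$, and write $e=w_1w_2$ for the edge into which $u_1u_2$ and $v_1v_2$ are merged; by hypothesis we may assume $u_1u_2$ lies in no $8^-$-cycle of $G-S$. Condition $(a)$ for $T_1$ and $T_2$ already guarantees that $D$ is unchanged, bounds $G'$, and that $\phi_0$ is still an $(I,F)$-coloring of $G'[V(D)]$; so the whole task reduces to excluding short cycles, special $9$-cycles, and a bad $D$ in $G'$. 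The reason for introducing $T_1,T_2$ is that Lemma \ref{operation} applies to each of them, so I may freely use that $G_1'$ and $G_2'$ lie in $\mathcal{G}$ with $D$ good and, in particular, that neither $T_1$ nor $T_2$ creates a $6^-$-cycle or an ext-triangular $7$- or $8$-cycle.

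The heart of the argument is a length-preserving correspondence between the cycles of $G'$ and those of $G$, $G_1'$, $G_2'$. Given a cycle $C'$ of $G'$, I would lift it to $G-S$ by resolving each passage through $w_1,w_2$ and each use of $e$ into one of $u_1u_2,v_1v_2$. If $C'$ uses $e$, or meets at most one of $w_1,w_2$, or meets both with matching lifts, then a suitable choice of lift realizes $C'$ as a cycle of the same length in $G$, or in $G_1'$ (a $u_1$--$v_1$ path closed up), or in $G_2'$ (a $u_2$--$v_2$ path closed up). The only remaining possibility is that $C'$ avoids $e$ but runs through both $w_1$ and $w_2$ with non-matching lifts, splitting into two arcs $A,B$. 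Here I distinguish the \emph{parallel} subcase, in which (up to swapping $A,B$) $A$ is a $u_1$--$u_2$ path and $B$ a $v_1$--$v_2$ path: then $A$ together with the edge $u_1u_2$ is a cycle of $G-S$ through $u_1u_2$, so $|A|\ge 8$ and $|C'|=|A|+|B|\ge 10$; and the \emph{crossed} subcase, in which $A$ is a $u_1$--$v_2$ path and $B$ a $v_1$--$u_2$ path: then $A$ with $u_1u_2$ closes to a $u_2$--$v_2$ path, i.e. a cycle created by $T_2$, forcing $|A|\ge 6$, and symmetrically $|B|\ge 6$, so $|C'|\ge 12$. In either subcase $|C'|\ge 10$.

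With this correspondence the three remaining obligations follow. Any $4$- or $6$-cycle of $G'$ has length below $10$, hence images a $4$- or $6$-cycle of $G$, $G_1'$, or $G_2'$, which is impossible; the same bound yields simplicity of $G'$. For special $9$-cycles and for a bad $D$ I would mirror the proof of Lemma \ref{operation}: the cells of a special $9$-cycle, and of any claw, biclaw, or triclaw on $D$, all have length at most $8$, so each cell is either already present in $G$ or, being a created cycle of length below $10$, is a created cycle of $T_1$ or $T_2$. If no cell is created, the whole configuration lives in $G$, contradicting $G\in\mathcal{G}$ or $D$ good in $G$; otherwise some created cell is a $6^-$-cycle or an ext-triangular $7$- or $8$-cycle, contradicting $(b)$ for $T_1$ or $T_2$. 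This replaces the step in Lemma \ref{operation} that excluded a single created cell by invoking "no edge identification" ($k\neq 1$) — precisely the step that fails for $T$, since $T$ merges $u_1u_2$ with $v_1v_2$.

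I expect the main obstacle to be the doubly-mixed cycles, and making the two bounds $|C'|\ge 10$ issue from the correct hypothesis: the parallel subcase genuinely needs that one of $u_1u_2,v_1v_2$ avoids all $8^-$-cycles (without it, triangles on $u_1u_2$ and on $v_1v_2$ would collapse to a $4$-cycle), whereas the crossed subcase instead leans on $(b)$ for $T_1,T_2$. The secondary delicate point is transferring the \emph{ext-triangular} attribute of a created $7$- or $8$-cell from $G'$ back to $G_1'$ or $G_2'$, which must be verified so that such a cell really does contradict $(b)$.
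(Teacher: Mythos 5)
Your cycle classification is sound, and the two length bounds for doubly-mixed cycles are correctly derived from the right hypotheses: the parallel subcase from the ``no $8^-$-cycle through $u_1u_2$'' assumption, the crossed subcase from condition $(b)$ for $T_1$ and $T_2$. This part cleanly disposes of $6^-$-cycles and of simplicity of $G'$. The paper argues differently, at the level of the whole forbidden configuration $C$: since neither $T_1$ nor $T_2$ creates $C$, the merged edge $w_1w_2$ must be a common edge of two cells of $C$ or a chord of a cell of $C$, and since every relevant cell has length at most $8$, this places both $u_1u_2$ and $v_1v_2$ on $8^-$-cycles of $G-S$, contradicting the hypothesis.

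The gap is in your treatment of special $9$-cycles and of a bad $D$, in both branches. First, ``if no cell is created, the whole configuration lives in $G$'' does not follow from each cell separately lifting to $G-S$: two cells sharing the merged edge $e=w_1w_2$ may lift to cycles of $G-S$ through $u_1u_2$ and through $v_1v_2$ respectively, so each cell exists in $G$ while the configuration does not; this is exactly the situation the $8^-$-cycle hypothesis is designed to kill (both lifts would be $8^-$-cycles of $G-S$), but your argument never invokes it at this point. Second, and more seriously, ``otherwise some created cell is a $6^-$-cycle or an ext-triangular $7$- or $8$-cycle'' is not justified: a lone created cell could be the $7$-cell of a $(5,5,7)$-claw or the $8$-cell of a $(5,5,8)$-claw of $D$, which is adjacent only to $5$-cells, hence not ext-triangular, and so violates nothing in $(b)$. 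In Lemma \ref{operation} this possibility is excluded by the parity count around the identified vertex ($k\in\{0,2\}$ created cells, after which Lemma \ref{bad cycle} forces one of the two to be a $5^-$-cell or an ext-triangular $7$-cell). You correctly identify the ``$k\neq 1$'' step as the one that breaks when $e$ can lift two ways, but the length-preserving correspondence does not replace it: it tells you each short cell is in $G-S$ or created by some $T_i$, not that the set of created cells contains one of the forbidden types. What is needed is to run the parity argument at $w_1$ and $w_2$ anyway, isolate the case in which $e$ itself is a claw-edge, chord, or common cell-edge of the configuration, and there invoke the $8^-$-cycle hypothesis a second time --- which is, in essence, what the paper's short argument does.
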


\begin{proof}
	For $i\in\{1,2\}$, denote by $w_i$ the vertex resulting from $u_i$ and $v_i$ by $T$.
	Since the condition $(a)$ holds for both $T_1$ and $T_2$, $D$ bounds $G'$ and $\phi_0$ is an $(I,F)$-coloring of $G'[V(D)]$.
	
	Suppose that $T$ creates a $6^-$-cycle or a special 9-cycle or a bad $D$, denoted by $C$.
	Since the two conditions $(a)$ and $(b)$ hold for both $T_1$ and $T_2$, by the proof of Lemma \ref{operation}, each $T_i$ does not create $C$. Hence, $w_1w_2$ must be either a common edge of some two cells of $C$ or a chord of some cell of $C$. 
	This implies that both $u_1u_2$ and $v_1v_2$ are contained in a $8^-$-cycle of $G-S$, contradicting the assumption.
		
    Therefore, $\phi_0$ can super-extend to $G'$ by the minimality of $G$.
\end{proof}

Given a plane graph. A \emph{good path} is a path $P=v_1v_2v_3v_4$ of the boundary of some face such that the edge $v_1v_2$ is triangular and all the vertices of $P$ are internal 3-vertices, see Figure \ref{fig-good path}. 

\begin{figure}[ht]
	\centering
	\includegraphics[width=7cm]{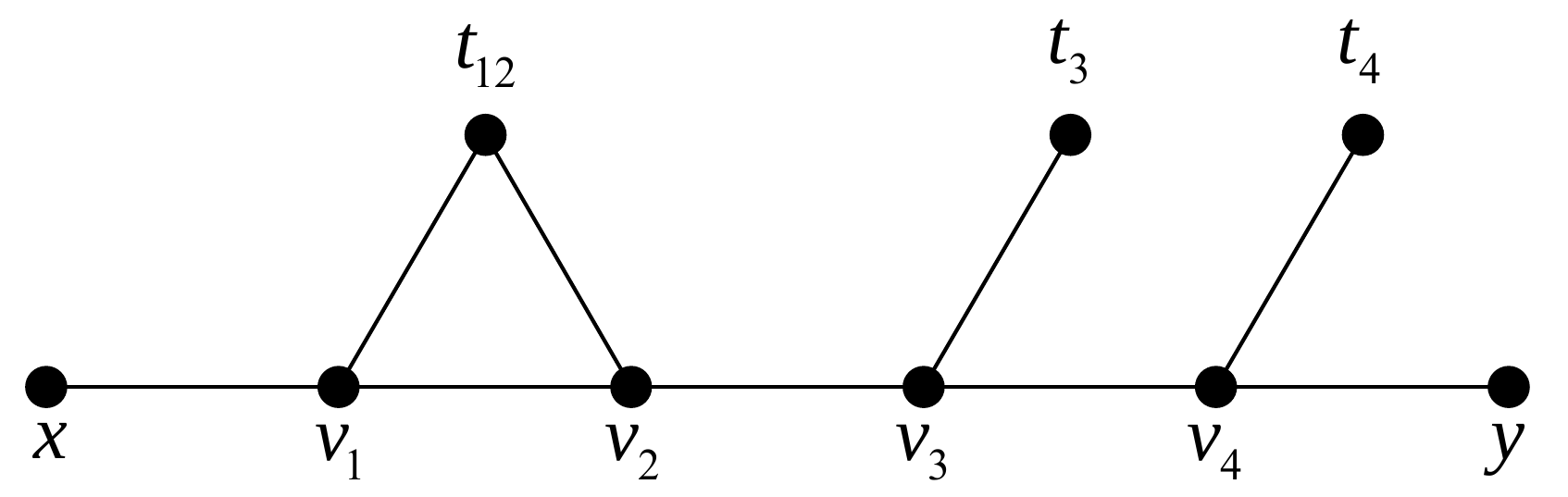}
	\caption{good path}\label{fig-good path}
\end{figure}

\begin{lemma} \label{lem_good path}
	$G$ has no good paths. 
\end{lemma}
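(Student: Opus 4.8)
Suppose for contradiction that $G$ contains a good path $P=v_1v_2v_3v_4$ lying on the boundary of a face $f$, with $v_1v_2$ triangular and $v_1,v_2,v_3,v_4$ internal $3$-vertices. The first thing I would do is pin down the local structure. Since $v_1v_2$ is triangular there is a triangle $v_1v_2x$, and because $v_2$ is a $3$-vertex its neighbourhood is exactly $\{v_1,v_3,x\}$. Applying Lemma~\ref{lem_min degree} to the remaining path vertices, I would record $N(v_1)=\{v_2,x,a\}$, $N(v_3)=\{v_2,v_4,b\}$ and $N(v_4)=\{v_3,c,d\}$, where $a$ and $c$ are the neighbours of $v_1$ and $v_4$ consecutive with $P$ on $f$. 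In particular $v_2$ and $v_3$ each have a single off-path neighbour ($x$ and $b$ respectively), while the endpoints $v_1,v_4$ each have two; and the edge $v_1x$ of the triangle will be the main source of rigidity.

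The plan is to reduce at the endpoint $v_4$ opposite the triangle. Let $G'$ be obtained from $G$ by deleting the internal set $\{v_2,v_3,v_4\}$ and identifying $c$ with $d$. I would then invoke Lemma~\ref{operation}: condition (a) holds once we check that $c,d$ are not both on $D$ and that the identification adds no edge joining two vertices of $D$; condition (b) amounts to showing the identification creates no $6^-$-cycle and no ext-triangular $7$- or $8$-cycle, i.e.\ that $c$ and $d$ admit no short connecting path in $G-\{v_2,v_3,v_4\}$. Granting this verification, $\phi_0$ super-extends to $G'$, yielding a coloring $\phi$ of $V(G)\setminus\{v_2,v_3,v_4\}$ with $\phi(c)=\phi(d)$ and, crucially, no $F$-path between $c$ and $d$ apart from through the merged vertex (any such path would be an $F$-cycle or a splitting $F$-path in $G'$).

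I would then reach a contradiction by colouring $v_2,v_3,v_4$ back so as to super-extend $\phi$ to $G$, arguing by cases on $\phi(v_1),\phi(x),\phi(b)$ and the common value $\phi(c)=\phi(d)$. The edge $v_1x$ forbids $\phi(v_1)=\phi(x)=I$. If $\phi(v_1)=\phi(x)=F$, then the triangle $v_1v_2x$ forces $v_2=I$, hence $v_3=F$; I then take $v_4=I$ when $\phi(c)=F$ and $v_4=F$ when $\phi(c)=I$, so that the $F$-coloured part among $v_2,v_3,v_4$ is a path pendant at one end (its only exits are the merged, non-$F$-connected pair $c,d$), and thus lies in no $F$-cycle and no splitting $F$-path of $D$. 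If exactly one of $v_1,x$ is coloured $I$, then $v_2=F$; here I would use the freedom in $v_4$'s colour either to set $v_3=I$ (when $\phi(b)=F$, taking $v_4=F$) or to keep $v_2v_3v_4$ a pendant $F$-tail (when $\phi(b)=I$, choosing $v_4$ by $\phi(c)$). In every case the two facts doing the work are that $v_4$'s colour is ours to choose and that $\phi(c)=\phi(d)$ with no $F$-path between them.

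The main obstacle is twofold. First, the colouring-back must be shown to create neither an $F$-cycle nor a splitting $F$-path of $D$; because the last vertex coloured has three coloured neighbours, the single-vertex ``nicely colour'' guarantee no longer applies, so I expect to lean on the identification $\phi(c)=\phi(d)$ (which turns any would-be $F$-cycle through $c,d$ into one already present in $G'$) together with the triangle to keep the forced $F$-subgraph a tree. Second, and I expect this to be the more delicate point, is verifying condition (b) of Lemma~\ref{operation}, namely excluding a short $c$--$d$ path: a path of length $\ell$ closes with $c\,v_4\,d$ into a cycle of length $\ell+2$, so the girth hypotheses of $\mathcal{G}$ already forbid $\ell\in\{2,4\}$ (and $\ell=1$ means $cd\in E$, handled as an edge-identification exception), while the remaining possibilities $\ell\in\{3,5,6\}$ would produce a short cycle in $G'$; ruling these out—or handling the $5$-cycle they reveal—via Lemma~\ref{splitting path} and the absence of separating good cycles (Lemma~\ref{lem_separating-cycle}) is where I anticipate the real work, possibly forcing a symmetric reduction at the $v_1$-end or a direct treatment of the exceptional short cycle.
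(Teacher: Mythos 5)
Your reduction is genuinely different from the paper's, and it has a gap you yourself flag but cannot close. You delete $\{v_2,v_3,v_4\}$ and identify the two off-path neighbours $c,d$ of $v_4$. For condition (b) of Lemma~\ref{operation} you must show that $c$ and $d$ admit no connecting path of length at most $6$ in $G-\{v_2,v_3,v_4\}$: a path of length $\ell$ closes into an $\ell$-cycle in $G'$, so even $\ell=3$ is fatal (it creates a triangle, which is a $6^-$-cycle and hence forbidden by (b)). But $\ell=3$ corresponds to a $5$-cycle of $G$ through $cv_4d$ --- for instance the face of $G$ at $v_4$ lying between the edges $v_4c$ and $v_4d$ may perfectly well be a $5$-face, and nothing in the hypotheses excludes this. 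Likewise $\ell=5$ and $\ell=6$ correspond to $7$- and $8$-cycles through $cv_4d$, which are also unavoidable. The deeper reason your reduction cannot work is that it makes no use of the triangle on $v_1v_2$: if deleting three internal $3$-vertices ending at $v_4$ and merging $v_4$'s remaining neighbours were a valid reduction, the triangularity hypothesis in the definition of a good path would be superfluous.

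The paper's reduction is chosen precisely to avoid this. It deletes all four vertices of $P$ and identifies $x$ (the non-triangle, non-path neighbour of $v_1$) with $t_3$ (the off-path neighbour of $v_3$); these are joined in $G$ by the path $xv_1v_2v_3t_3$ of length $4$, so a created $6^-$-cycle or ext-triangular $7$- or $8$-cycle lifts to a $12^-$-cycle $C$ of $G$ containing that path. Such a $C$ is then killed using the triangle $v_1v_2t_{12}$: by planarity $t_{12}$ lies on or inside $C$ (or $v_4$ does), and one either gets a triangular $6^-$-cycle or, via Lemmas~\ref{lem_separating-cycle} and~\ref{bad cycle 1}, a bad cycle adjacent to two triangles. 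The colouring-back is then a four-case analysis on the common colour of $x$ and $t_3$, the colour of $t_{12}$, and $F$-linkedness. If you want to salvage your endpoint-at-$v_4$ idea, you would have to treat the short $c$--$d$ paths as additional configurations rather than hoping to exclude them, which is substantially more work than the paper's argument.
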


\begin{proof}
    Suppose to the contrary that $G$ has a good path $P=v_1v_2v_3v_4$, using the same label for vertices as in Figure \ref{fig-good path}. 
    Since $G\in \mathcal{G}$, all the vertices in Figure \ref{fig-good path} are pairwise distinct except that $t_3$ and $t_4$ might coincide. Apply on $G$ the following operation $T$: remove all the vertice of $P$ and identify $x$ with $t_3$, obtaining a smaller plane graph $G'$.

    Suppose that $T$ creates a $6^-$-cycle or an ext-triangular 7- or 8-cycle. Thus, $G-v_4$ has a $12^-$-cycle $C$ containing $xv_1v_2v_3t_3$ and additionally, if $|C|\in \{11,12\}$ then the path $C-\{v_1,v_2,v_3\}$ is triangular.
    By planarity, $t_{12}\in V(C)$ or $t_{12}\in \intt(C)$ or $v_4\in \intt(C)$. For the first case, between the two cycles formed by paths $C-v_1v_2$ and $v_1t_{12}v_2$, at least one is a triangular $6^-$-cycle, contradicting that $G\in \mathcal{G}$.
    For the last two cases, $C$ is a bad cycle by Lemma \ref{lem_separating-cycle}. But now $C$ is adjacent to two triangles, contradicting Lemma \ref{bad cycle 1}.
    So, the item (b) of Lemma \ref{operation} holds for $T$.

    Suppose that $T$ identifies two external vertices or create an edge connecting two external vertices.
    Thus, $xv_1v_2v_3t_3$ is contained in a splitting 4- or 5-path of $D$, which together with $D$ forms a $9^-$-cycle by Lemma \ref{splitting path}. Thus, $T$ creates a $5^-$-cycle, a contradiction.  
    Therefore, the item $(a)$ of Lemma \ref{operation} holds for $T$.
    
    Hence, $\phi_0$ can super-extend to $G'$ by Lemma \ref{operation} and further to $G$ as follows. 
    Nicely color $v_4$ and $v_3$ in turn.  Clearly, $x$ and $t_3$ receive the same color, say $\alpha$.  Denote by $\beta$ and $\gamma$ the colors of $t_{12}$ and $v_3$, respectively. We distinguish the following four cases.
    
    (\romannumeral1) If $\alpha=I$, then color $v_1$ by $F$ and color $v_2$ different from $t_{12}$. Note that the coloring of $P$ brings neither $F$-cycle nor splitting $F$-path of $D$, we are done.
    
    (\romannumeral2) If $\alpha=F$ and $\beta=I$, then color both $v_1$ and $v_2$ by $F$, we are done. Notice that $xv_1v_2v_3t_3$ might be an $F$-path, which however does not bring an $F$-cycle or a splitting $F$-path of $D$ since otherwise, identifying $x$ with $t_3$ yields an $F$-cycle or a splitting $F$-path of $D$ in $G'$.

    (\romannumeral3) Let $\alpha=\beta=F$ and $\gamma=I$. Color $v_1$ by $I$ and $v_2$ by $F$, we are done.

    (\romannumeral4) Let $\alpha=\beta=F$ and $\gamma=F$.    
    Since identifying $x$ with $t_3$ yields neither $F$-cycle nor splitting $F$-path of $D$ in $G'$, at least one of $(x,t_{12})$ and $(t_{12},t_3)$ is not $F$-linked, for which we color $v_1$ by $F$ or $I$ respectively, and color $v_2$ different from $v_1$. 
\end{proof}

\begin{lemma} \label{lem_5,7-face}
	For $k\in\{5,7\}$, $G$ has no $k$-face that contains $k$ internal 3-vertices.
\end{lemma}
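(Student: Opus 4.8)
The plan is to argue by contradiction and to show that such a face yields a reducible configuration. Suppose $G$ has a $k$-face $f=v_1v_2\ldots v_k$ with $k\in\{5,7\}$ whose vertices are all internal $3$-vertices, and for each $i$ let $u_i$ be the unique neighbour of $v_i$ off the face $f$ (indices read modulo $k$). The first step is to record the local structure. Since every edge $v_iv_{i+1}$ lies on the boundary of $f$ and all of $v_1,\dots,v_k$ are internal $3$-vertices, any triangular edge of $f$ would, after a cyclic relabelling, give a path $v_iv_{i+1}v_{i+2}v_{i+3}$ that is a good path in the sense of Lemma \ref{lem_good path}; hence $f$ is non-triangular and in particular $u_i\neq u_{i+1}$ for every $i$. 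Consequently (using that $G$ is $2$-connected by Lemma \ref{lem_2connected}) the face $g_i$ sharing the edge $v_iv_{i+1}$ with $f$ is bounded by a cycle of length $|g_i|\ge 5$ with $|g_i|\ne 6$, and its boundary reads $u_iv_iv_{i+1}u_{i+1}Q_i$ for a path $Q_i$ from $u_{i+1}$ to $u_i$ with $|Q_i|=|g_i|-3\ge 2$.

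I would then delete $v_1,\dots,v_k$ and repair the resulting hole by a single local operation on the outer neighbours $u_1,\dots,u_k$: either adding one edge or identifying two vertices, or, where needed, identifying two edges via Lemma \ref{operation_merge_edge}. Having chosen such an operation so that the hypotheses $(a)$ and $(b)$ of Lemma \ref{operation} hold, the resulting plane graph $G'$ is smaller, still lies in $\mathcal{G}$ and keeps $D$ as a good boundary, so $\phi_0$ super-extends to $G'$ by minimality. It then remains to colour $v_1,\dots,v_k$ back. Each $v_i$ sees exactly one already-coloured vertex, namely $u_i$, so the path $v_1\ldots v_k$ can be treated by the nice-colouring procedure, which by construction creates neither an $F$-cycle nor a splitting $F$-path of $D$ along $v_1\dots v_k$. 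The only way the extension can fail is that the cycle $f$ itself becomes monochromatic in $F$; but $f$ is forced to be an $F$-cycle exactly when every $u_i$ is coloured $I$, since such a $v_i$ is then barred from $I$. Thus the repairing operation only needs to guarantee that not all $u_i$ receive colour $I$: an added edge $u_au_b$, or an identification of $u_a$ and $u_b$, already forbids $u_a=u_b=I$ and hence rules out this obstruction. With some $u_a$ coloured $F$ I may set $v_a=I$, breaking $f$, and nicely colour the remaining path.

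The crux is to realise the repairing operation while respecting condition $(b)$, i.e.\ creating no $6^-$-cycle and no ext-triangular $7$- or $8$-cycle. A new edge $u_au_b$ (resp.\ an identification of $u_a,u_b$) creates a short cycle precisely when $u_a$ and $u_b$ are joined by a short path in $G-\{v_1,\dots,v_k\}$, so the operation is only admissible when the chosen pair is far enough apart; this is exactly the point where the surrounding faces $g_i$ being small -- so that the $u_i$ cluster together -- causes trouble. Here I would exploit Lemma \ref{lem_separating-cycle}. The outer boundary of the patch $f\cup g_1\cup\cdots\cup g_k$ is the closed walk $u_1Q_1u_2Q_2\cdots u_kQ_k$ of length $\sum_i(|g_i|-3)$; whenever a sub-arc of this walk closes up into a cycle of length at most $12$ having no claw, biclaw or triclaw, that cycle is a separating good cycle, which is forbidden. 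For instance, when $k=5$ and all $g_i$ are $5$-faces the whole outer boundary is a good $10$-cycle separating $f$ from $D$, an immediate contradiction.

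Analysing which patterns of $|g_1|,\dots,|g_k|$ survive Lemma \ref{lem_separating-cycle}, together with Lemmas \ref{bad cycle} and \ref{bad cycle 1} to control any created cycle of length $11$ or $12$, should always leave a pair $u_a,u_b$ at distance large enough for a legitimate repair, after which the colouring argument above finishes the proof. The main obstacle is precisely this structural case analysis: pinning down, for each admissible profile of surrounding face lengths (and separately for $k=5$ and $k=7$), a concrete operation whose verification of $(a)$ and $(b)$ runs along the same lines as in the proofs of Lemmas \ref{lem_good path} and \ref{operation}, while simultaneously excluding the all-$I$ colouring of $u_1,\dots,u_k$.
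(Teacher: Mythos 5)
Your overall strategy---delete the face vertices, repair the hole by joining two of the outer neighbours, invoke Lemma \ref{operation}, then recolour---is indeed the paper's strategy (for $k=5$ it inserts the edge $t_1t_3$ after choosing $v_2$ incident with two $7^+$-faces; for $k=7$ it inserts $t_1t_4$). However, your recolouring step has a genuine gap. You assert that ``the only way the extension can fail is that the cycle $f$ itself becomes monochromatic in $F$,'' and hence that it suffices to force some outer neighbour $u_a$ to receive $F$. This is false for a \emph{super}-extension: one must also avoid creating any new $F$-cycle or splitting $F$-path of $D$ elsewhere in $G$. Whenever two consecutive face vertices $v_i,v_{i+1}$ both receive $F$ while their outer neighbours $u_i,u_{i+1}$ are both $F$, the path $u_iv_iv_{i+1}u_{i+1}$ becomes an $F$-path; if the pair $(u_i,u_{i+1})$ is $F$-linked in $G'$ (joined by an $F$-path, or each joined to an external vertex by disjoint $F$-paths), this closes an $F$-cycle or a splitting $F$-path of $D$. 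Since $k$ is odd, every admissible colouring of the $k$-cycle $f$ other than the all-$F$ one still contains an adjacent $F$-pair, so this obstruction cannot be dodged by parity. Handling it is the heart of the paper's proof: when all outer neighbours are coloured $F$, the inserted edge $t_1t_3$ forces at least one of the pairs $(t_1,t_2),(t_2,t_3)$ to be non-$F$-linked, and the colouring $F,F,I,F,I$ is rotated so that its unique adjacent $F$-pair sits exactly on that non-$F$-linked pair. Your proposal never engages with this mechanism; you even assign the inserted edge the wrong role (forbidding the all-$I$ pattern on the $u_i$ rather than breaking $F$-linkage).

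Two further points. First, your fallback ``set $v_a=I$ and nicely colour the remaining path'' is not covered by the paper's nice-colouring facts: after $v_a$ is coloured, the final vertex of the remaining path has three already-coloured neighbours (its outer neighbour and both face-neighbours), so nicely colouring it may merge two $F$-components and create precisely the forbidden $F$-connection; the paper instead uses $I$-nicely colouring the path first and colouring $v_q$ \emph{last} with $F$ when some $t_q$ has colour $I$. Second, you leave the choice of the repairing pair and the verification of condition $(b)$ of Lemma \ref{operation} as an unexecuted ``structural case analysis,'' whereas this is a substantial concrete part of the paper's argument (the choice of $v_2$ on two $7^+$-faces, and the analysis of a putative short created cycle via Lemmas \ref{facial cycle}--\ref{bad cycle 1}). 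As written, the proposal is a plan that identifies the right first move but misdiagnoses, and therefore does not resolve, the main difficulty.
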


\begin{proof}
Suppose to the contrary that $G$ has such a $k$-face $f=[v_1...v_k]$. Let $t_i$ be the remaining neighbor of $v_i$ for $i\in\{1,2,\dots,k\}$. Since $G\in \mathcal{G}$ and Lemma \ref{lem_good path}, these vertices $t_1,\ldots,t_k$ are pairwise distinct.

   {\bf  Case 1}:
    Let $k=5$. Since $G\in \mathcal{G}$, $f$ contains a vertex incident with two $7^+$-faces, w.l.o.g., say $v_2$.
    Apply on $G$ the following operation $T$: remove $V(f)$ and insert an edge between $t_1$ and $t_3$, obtaining a smaller plane graph $G'$.

    Suppose that $T$ creates a $6^-$-cycle or an ext-triangle 7- or 8-cycle. Then $G-\{v_4,v_5\}$ has an $11^-$-cycle $C$ containing the path $P=t_1v_1v_2v_3t_3$ and additionally, $\ext[C]$ has a triangle sharing an edge with $C-E(P)$ when $|C|\in \{10,11\}$.
    If $C$ is a good cycle, then $t_2\in V(C)$ and thus, $v_2t_2$ is a $(7^+,7^+)$-chord of a $11^-$-cycle $C$, a contradiction.
    So, $C$ is a bad 11-cycle. By Lemma \ref{bad cycle}, $C$ must contain $t_2$ inside and have a $(3,7,7)$-claw. Now, $C$ is adjacent to two triangles in $G$, contradicting Lemma \ref{bad cycle 1}. Therefore, the item $(b)$ of Lemma \ref{operation} holds for $T$.

    If both $t_1$ and $t_3$ are external, then $P$ is a splitting 4-path of $D$, which together with $D$ forms a 5- or 7-cycle $C$ by Lemma \ref{splitting path}. Then $T$ creates a $2$- or $4$-cycle, contradicting the truth of the item $(b)$. Hence, the item $(a)$ of Lemma \ref{operation} holds for $T$.

     Hence, $\phi_0$ can super-extend to $G'$ by Lemma \ref{operation} and further to $G$ as follows. Firstly, assume that all the vertices of $\{t_1,t_2,\ldots, t_5\}$ are of color $F$. If both the pairs $(t_1,t_2)$ and $(t_2,t_3)$ are $F$-linked, then $t_1t_3$ is contained in an $F$-cycle or a splitting $F$-path of $D$ in $G'$, a contradiction. Hence, at least one of the pairs $(t_1,t_2)$ and $(t_2,t_3)$ is not $F$-linked, say $(t_p,t_{p+1})$. Color the vertices of $f$ with $F, F, I, F, I$ in cyclic order starting from $t_p$ and then $t_{p+1},\cdots$, we are done.
     It remains to assume that there is a vertex from $\{t_1,t_2,\ldots, t_5\}$ of color $I$, say $t_q$. $I$-nicely color the path $v_{q+1}v_{q+2}\ldots v_{q-1}$, where the addition for the index runs modulo $k$. Finally, assign $v_q$ with color $F$, which 
     obviously brings neither $F$-cycles nor splitting $F$-paths of $D$.

   {\bf  Case 2}:
    Let $k=7$.
    Apply on $G$ the following operation $T$: remove all the vertice of $f$ and insert an edge between $t_1$ and $t_4$, obtaining a smaller plane graph $G'$.

    Suppose that $T$ creates a $6^-$-cycle or an ext-triangle 7- or 8-cycle. Then  $G-\{v_5,v_6,v_7\}$ has a $12^-$-cycle $C$ containing the path $P=t_1v_1v_2v_3v_4t_4$ and additionally, $\ext[C]$ has a triangle sharing an edge with $C-E(P)$ when $|C|\in \{11,12\}$.
    If $C$ is a good cycle, then $t_2,t_3\in V(C)$. Since $|C|\leq 12$, each edge of $v_1v_2v_3v_4$ is incident with a $5$-face. Now $|C|=11$, which implies that one of those $5$-faces is adjacent to a triangle, a contradiction.
    So, $C$ is a bad cycle. On one hand, $C$ has a (5,5,7)-claw or (5,5,5,7)-biclaw by Lemma \ref{bad cycle 1}. On the other hand, either $v_5,v_6,v_7\in \intt(C)$ or $C$ contains $t_2t_3$ inside by planarity. A contradiction follows. So, the item $(b)$ of Lemma \ref{operation} holds for $T$.
    
    If both $t_1$ and $t_4$ are external vertices, then $P$ is a splitting 5-path of $D$, which together with $D$ forms a $9^-$-cycle by Lemma \ref{splitting path}. Then $T$ creates a $5^-$-cycle, contradicting the truth of the item $(b)$.  So, the item $(a)$ of Lemma \ref{operation} holds for $T$.    

     Hence, $\phi_0$ can super-extend to $G'$ by Lemma \ref{operation} and further to $G$ in a similar way as for Case (1). 
\end{proof}

A \emph{3-7-face} $H$ consists of a 3-face $[xzy]$ and a 7-face $[xzv_1\ldots v_5]$ such that their common part is the edge $xz$, $z$ is an internal 4-vertex, and all other vertices of $H$ are internal 3-vertices.

\begin{figure}[ht]
	\centering
	\includegraphics[width=4cm]{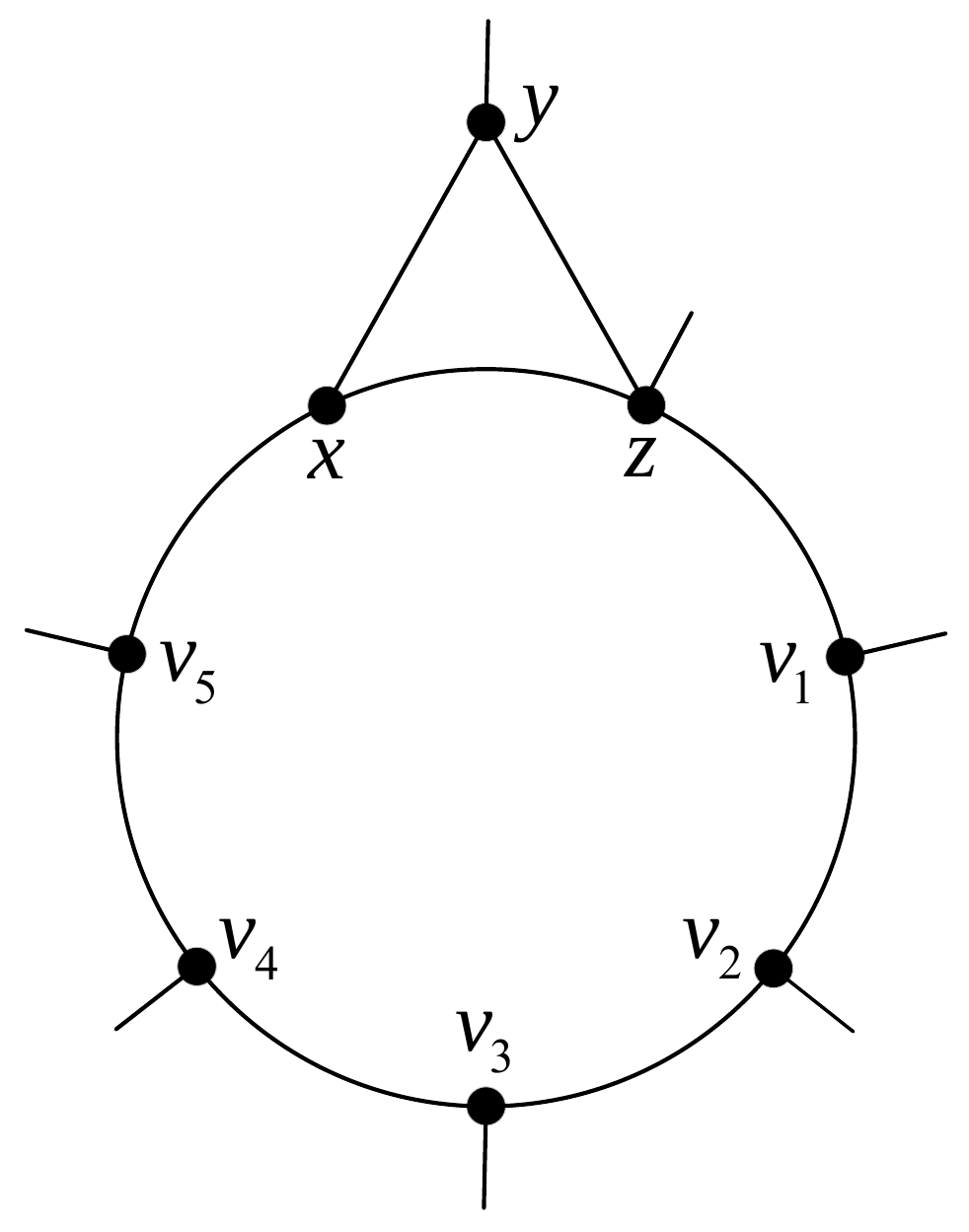}
	\caption{3-7-face}\label{fig_3-7-face}
\end{figure}

\begin{lemma} \label{lem_8-cycle}
	$G$ has no 3-7-faces.
\end{lemma}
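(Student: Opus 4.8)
The plan is to argue by contradiction in the same style as Lemmas~\ref{lem_good path} and~\ref{lem_5,7-face}. Suppose $G$ has a 3-7-face $H$ formed by the triangle $[xzy]$ and the 7-face $[xzv_1\cdots v_5]$. First I would record the global shape of $H$: its outer boundary is the 8-cycle $C=yxv_5v_4v_3v_2v_1z$ with $(3,7)$-chord $xz$, and $C$ has no vertex in its interior besides this chord. Since $z$ is an internal 4-vertex while every other vertex of $H$ is an internal 3-vertex, the vertex $x$ has all three of its neighbours $z,y,v_5$ inside $H$, whereas each of $y,v_1,\dots,v_5,z$ has exactly one neighbour off $H$, which I denote $t_y,t_1,\dots,t_5,t_z$ respectively. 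As in the earlier lemmas I would begin by checking that these outer neighbours are pairwise distinct and distinct from $V(H)$: any coincidence, or any edge joining two of them, would produce a short path between two vertices of $H$ and hence, through $C$, a $4$- or $6$-cycle or a special $9$-cycle, contradicting $G\in\mathcal G$ together with Lemmas~\ref{lem_good path} and~\ref{splitting path}.

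Next I would reduce. Let $T$ delete all eight vertices of $H$ and add the single edge $t_yt_5$ joining the outer neighbours of the two corners $y$ and $v_5$ that flank $x$ on $C$; the through-$H$ path realised by this edge is $t_yyxv_5t_5$, of length $4$. To apply Lemma~\ref{operation} I verify its hypotheses in the order $(b)$ then $(a)$. For $(b)$: a newly created $6^-$-cycle or ext-triangular $7$- or $8$-cycle would arise from a short external $t_y$--$t_5$ path closing through $t_yt_5$, and glueing this path to $t_yyxv_5t_5$ yields, in $G$, a short cycle through the triangle edge $yx$ and the $7$-face edge $xv_5$, which by Lemma~\ref{facial cycle} cannot be facial and by Lemmas~\ref{bad cycle} and~\ref{bad cycle 1} cannot be a bad cycle adjacent to the two triangles of $H$, a contradiction. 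For $(a)$: were $t_y$ and $t_5$ both on $D$, then $t_yyxv_5t_5$ would be a splitting $4$-path of $D$, so by item (3) of Lemma~\ref{splitting path} one side is a $5$- or $7$-cycle; the edge $t_yt_5$ then short-cuts this into a $2$- or $4$-cycle, contradicting $(b)$. Hence $\phi_0$ super-extends to $G'$ by the minimality of $G$.

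It remains to push the colouring back across $H$. If some outer neighbour already carries the colour $I$, I would anchor an $I$-nicely colouring of the path $v_1\cdots v_5$ at that vertex and then colour $x,y,z$ greedily, as in Lemma~\ref{lem_5,7-face}; since $x$ lies on the triangle, the $7$-cycle and $C$ simultaneously, it can be used to break whichever of these three threatens to become an $F$-cycle. The essential case is the residual one in which $t_y,t_z,t_1,\dots,t_5$ are all coloured $F$. Here I would set $z=I$, which is legitimate because $t_z=F$, and which at once destroys all three cycles through $z$; this forces $x,y,v_1=F$, and I then colour the path by $v_1,v_2,v_3,v_4,v_5=F,I,F,I,F$. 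One checks the $I$-vertices $z,v_2,v_4$ are independent and the $F$-subgraph of $H$ splits into the tree $\{y,x,v_5\}$ and the singletons $\{v_1\},\{v_3\}$; the only component with two outer legs is $\{y,x,v_5\}$, whose legs are precisely $t_y$ and $t_5$. Because $t_yt_5$ is an $F$-edge of $G'$ and the super-extension contains no $F$-cycle, $t_y$ and $t_5$ are not $F$-linked in $G'$, so the $F$-path $t_yyxv_5t_5$ closes neither an $F$-cycle nor a splitting $F$-path of $D$; this yields a super-extension to $G$, the desired contradiction.

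The step I expect to be hardest is this final colouring extension, together with the design of the reduction that makes it go through. The whole art is to place the added edge on the one unavoidable two-legged $F$-component: the residual pattern must be engineered so that $x$ (the unique vertex with no outer neighbour, lying on all three cycles) absorbs the cycle-breaking while the forced $F$-structure fragments into pieces carrying at most one outer leg apart from a single pair, and the new edge $t_yt_5$ sits on exactly that pair. A secondary difficulty is discharging condition $(b)$: because the $7$-face is long, the added edge can a priori close $7$- or $8$-cycles, and ruling out the ext-triangular ones needs the precise inventory of bad cycles supplied by Lemmas~\ref{bad cycle} and~\ref{bad cycle 1}.
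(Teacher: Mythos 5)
Your reduction step contains a genuine gap. You verify condition $(b)$ of Lemma~\ref{operation} by arguing that a newly created $6^-$-cycle would correspond, in $G$, to a short cycle through the path $t_yyxv_5t_5$, ``which by Lemma~\ref{facial cycle} cannot be facial.'' This inverts that lemma: Lemma~\ref{facial cycle} asserts that $9^-$-cycles of $G$ \emph{are} facial, so it provides no obstruction here. Concretely, the path $yxv_5$ lies on the boundary of the third face at $x$ (the face other than the triangle $[xzy]$ and the $7$-face $[xzv_1\ldots v_5]$), and that face contains the whole path $t_yyxv_5t_5$. Nothing in the definition of a 3-7-face, nor in Lemmas~\ref{lem_good path}--\ref{lem_5,7-face} (which require degree conditions on $t_y$ and $t_5$ that are not given), forbids this face from being a $7$-, $8$-, or $9$-face; in that event your added edge $t_yt_5$ closes a $4$-, $5$-, or $6$-cycle in $G'$, and the operation violates $(b)$. (A $5$- or $6$-face there is excluded because its union with the triangle along $xy$ would give a $6^-$-cycle, but a $7$-face merely gives an $8$-cycle in $G$, which is permitted.) Since your entire argument funnels the hard all-$F$ case onto the edge $t_yt_5$, the proof does not survive this failure. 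A smaller slip: $H$ has one triangle, not two, so the appeal to Lemma~\ref{bad cycle 1} about ``a bad cycle adjacent to the two triangles of $H$'' is also off.

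The paper avoids the problem by not performing any identification or edge addition at all: it simply deletes $V(H)$, super-extends $\phi_0$ to $G-V(H)$ by minimality (deletion creates no cycles, so no analogue of conditions $(a)$/$(b)$ is needed), and then recovers $G$ by a colouring case analysis. The key device you are missing is the $F$-nicely recolouring of the path $v_5v_4\ldots v_1zy$: in the residual case where $y,z$ are $F$, $v_5$ is $I$ and the outer neighbour of $y$ is $I$, the paper flips the alternation along the whole path so that $v_5$ and $y$ both become $F$, and then colours $x$ opposite to $z$. This exploits the fact that $x$ has no neighbour outside $H$ and that the path can be recoloured globally, rather than trying to pre-install an edge between two outer neighbours. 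If you want to salvage your approach, you would have to first rule out a $9^-$-face on the far side of $yxv_5$, which appears to require an additional reducibility argument not available at this point.
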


\begin{proof}
	
	Suppose to the contrary that $G$ has a 3-7-face $H$, using the same label for vertices as in Figure \ref{fig_3-7-face}.
	The pre-coloring $\phi_0$ can super-extend to $G-V(H)$ by the minimality of $G$ and further to $G$ as follows. 
	
	$I$-nicely color the path $v_5v_4\ldots v_1zy$.
	If at least one of $y$ and $z$ is of color $I$, then assign $x$ with $F$, which brings neither $F$-cycle nor splitting $F$-path of $D$ except that $[xzv_1v_2\ldots v_5]$ might be an $F$-cycle. For this exceptional case, the remaining neighbor of each vertex from $\{z,v_1,v_2,\ldots,v_5\}$ is of color $I$. Reassign $x$ with $I$ and $y$ with $F$, which brings neither $F$-cycle nor splitting $F$-path of $D$, we are done. Hence, we may next assume that both $y$ and $z$ are of color $F$. 
	
	If $v_5$ is of color $F$, then assign $x$ with $I$, we are done. So, let $v_5$ be of color $I$. 
	Denote by $y'$ the remaining neighbor of $y$. If $y'$ is of color $F$, then recolor $y$ with $I$ and color $x$ with $F$, we are done.
	So, let $y'$ be of color $I$.	
	$F$-nicely recolor the path $v_5v_4\ldots v_1zy$, which yields that both $v_5$ and $y$ are of color $F$, but the color of $z$ might be changed. Finally, color $x$ different from $z$, we are done. 
\end{proof}

   A \emph{7-7-face} $H$ consists of two 7-faces $[xu_6...u_1]$ and $[xv_1...v_6]$ such that their common part is the vertex $x$, $u_1$ is adjacent to $v_1$, both $x$ and $u_1$ are internal 4-vertices, and all other vertices of $H$ are internal 3-vertices, see Figure \ref{fig-77face}.

\begin{figure}[ht]
	\centering
	\includegraphics[width=7cm]{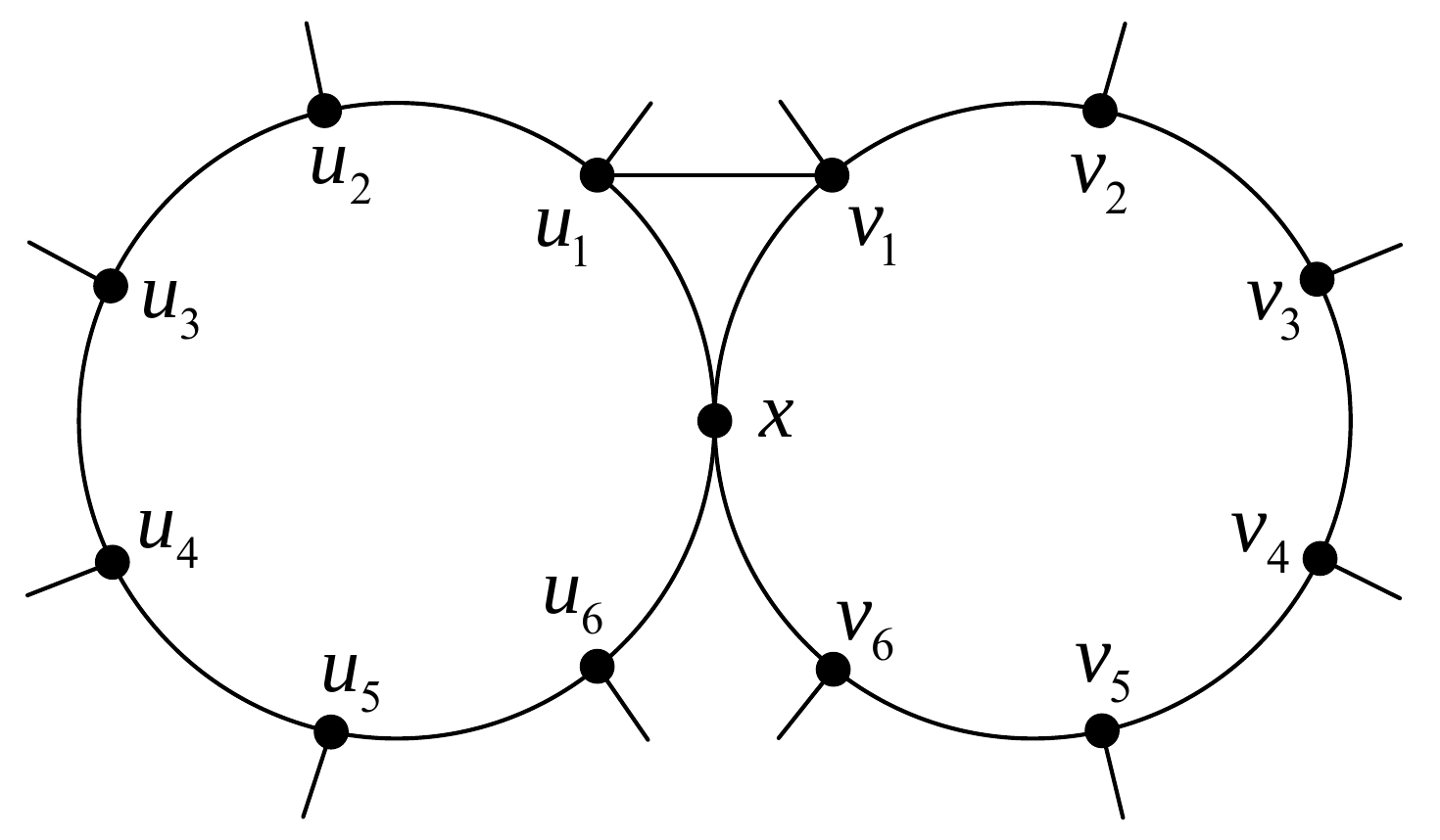}
	\caption{7-7-face}\label{fig-77face}
\end{figure}

\begin{lemma} \label{lem 7-7-face}
	$G$ has no 7-7-faces.
\end{lemma}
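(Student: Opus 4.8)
The plan is to follow the pattern of Lemma~\ref{lem_8-cycle}: delete the whole configuration, super-extend to the rest of $G$ by minimality, and then colour the deleted vertices by hand. So suppose $G$ contains a 7-7-face $H$ with the labels of Figure~\ref{fig-77face}; thus the bounded faces of $H$ are the triangle $T=xu_1v_1$ and the two $7$-faces $C_u=[xu_6\ldots u_1]$ and $C_v=[xv_1\ldots v_6]$. Reading off incidences, $x$ is adjacent to $u_1,u_6,v_1,v_6$ and $v_1$ to $x,u_1,v_2$, so $x$ and $v_1$ have all their neighbours in $H$, while $u_1$ has exactly one neighbour $u_1'$ outside $H$ and each of $u_2,\ldots,u_6,v_2,\ldots,v_6$ is an internal $3$-vertex with exactly one neighbour outside $H$. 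By the minimality of $G$, $\phi_0$ super-extends to $G-V(H)$, and it remains to colour $V(H)$ so that the whole colouring is an $(I,F)$-coloring of $G$ creating neither an $F$-cycle nor a splitting $F$-path of $D$.

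I would first isolate the $F$-cycles internal to $H$ that must be avoided. Every cycle of $H$ other than $T$, $C_u$, $C_v$ passes through all three of $x,u_1,v_1$, so it suffices to keep each of $T$, $C_u$, $C_v$ from becoming monochromatic in $F$. Since $x$ lies on all three and $u_1$ on two of them, giving $x$ the colour $I$ destroys all three at once, and giving $u_1$ the colour $I$ destroys $T$ and $C_u$; consequently the internal $F$-cycles will never be the real obstacle, and the work will lie entirely in avoiding splitting $F$-paths.

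The colouring then proceeds in two stages. First colour the two arms $\vec P_u=u_2u_3u_4u_5u_6$ and $\vec P_v=v_2v_3v_4v_5v_6$, which lie in $H$; since every vertex of either arm has a single neighbour in the already-coloured graph $G-V(H)$, taking $G-V(H)$ as the coloured subgraph and $I$-nicely or $F$-nicely colouring the arm produces neither an $F$-cycle nor a splitting $F$-path. Here I would exploit the freedom in the type and the direction of the nice-colouring to pin down the arm-ends; for example, directing $\vec P_u$ from $u_2$ and $F$-nicely colouring forces $u_2=F$ (the first remaining vertex receives $F$, and a vertex whose outside neighbour is coloured $I$ also receives $F$), which is exactly what is needed to then set $u_1=I$. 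In the second stage I colour $x,u_1,v_1$, distinguishing cases by the colours their neighbours already carry and always arranging that at least one of $x,u_1$ takes the colour $I$, so that $T,C_u,C_v$ remain non-monochromatic, while $v_1$, which has no neighbour outside $H$, is coloured last.

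The main obstacle is the single neighbour $u_1'$ of $u_1$ outside $H$: it is the only attachment of the triangle to the rest of $G$, hence the only vertex of $T$ through which a new splitting $F$-path of $D$ can run, and the genuinely hard situation is when $u_1'$ is coloured $F$ and the colours already placed force $u_1=F$. Then a new splitting $F$-path could enter through $u_1'$, cross $u_1$, and leave $H$ either along $u_1v_1v_2\ldots$ or along $u_1u_2\ldots$. I would rule this out by a case analysis on the $F$-links present in the super-extension of $G-V(H)$, exactly in the spirit of case~(\romannumeral4) of Lemma~\ref{lem_good path} and the $F$-linked analysis of Lemma~\ref{lem_5,7-face}: for each configuration of which sides of $u_1$ are $F$-linked to the outer boundary one chooses the colour of $v_1$, and if necessary recolours one arm with the opposite nice-colouring to plant a blocking $I$, so that no $F$-path survives from $u_1'$ through $u_1$ to a second external vertex. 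Showing that such a choice always exists—so that a splitting path through $u_1$ is never forced—is the heart of the matter, whereas keeping $T,C_u,C_v$ non-monochromatic is routine; if the direct bookkeeping becomes unwieldy, the same final step can instead be carried out as an edge addition, so that Lemma~\ref{operation} (or Lemma~\ref{operation_merge_edge}) delivers the super-extension and the attendant $F$-linked contradiction automatically.
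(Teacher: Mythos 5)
Your overall strategy is the same as the paper's: delete $V(H)$, super-extend to $G-V(H)$ by minimality, nicely color the two arms, and then resolve the junction $x,u_1,v_1$ by cases. But the proposal stops exactly where the proof has to start. The entire content of this lemma is the case analysis you defer with ``showing that such a choice always exists \ldots is the heart of the matter'': one must exhibit, for every configuration of colors on the outside neighbors of $u_1,\ldots,u_6,v_2,\ldots,v_6$ and every pattern of $F$-links in $G-V(H)$, a concrete assignment (possibly after recoloring one arm with the opposite nice-coloring) that simultaneously respects independence at the degree-4 vertices $x$ and $u_1$, keeps $T$, $C_u$, $C_v$ non-monochromatic, and blocks every $F$-path crossing between the two arms through $u_1v_1$ or through $x$ as well as every $F$-path entering via $t_1=u_1'$. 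The paper does this in about eight interlocking cases (e.g.\ ``if $P_1$ is an $F$-path, then $F$-nicely color $\vec{P_2}$ and set $x=I$''; ``if $t_1=F$, $F$-nicely recolor $P_1$ to force $u_1=I$''; and a final repair step when $u_1xv_6$ threatens to lie on an $F$-cycle or splitting $F$-path). None of that is present, and the fallback of ``carry out the final step as an edge addition so that Lemma~\ref{operation} delivers the contradiction'' is not a substitute: you do not say which edge to add, and verifying conditions $(a)$ and $(b)$ of that lemma for such an operation is itself a nontrivial argument that the paper does not need here.

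There is also a concrete false step in the plan: you propose to ``always arrange that at least one of $x,u_1$ takes the colour $I$.'' This is not always possible. If, say, $u_2$, $u_6$ and $v_6$ all receive $I$ from the arm colorings, then independence forbids both $u_1=I$ (adjacent to $u_2$) and $x=I$ (adjacent to $u_6$ and $v_6$). The paper's proof indeed has branches in which both $x$ and $u_1$ end up colored $F$ (monochromaticity of $T$ and $C_v$ being prevented by $v_1=I$ instead), and it is precisely in those branches that the dangerous $F$-subpaths $u_1xv_6$ and $u_1v_1$ must be analyzed via $F$-links and a recoloring of $P_1$ or $P_2$. So the invariant you build the argument around would have to be abandoned, and with it the claim that ``internal $F$-cycles will never be the real obstacle'' needs a different justification in those cases.
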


\begin{proof}
    Suppose to the contrary that $G$ has a 7-7-face $H$, using the same label for vertices as in Figure \ref{fig-77face}. 
    The pre-coloring $\phi_0$ can super-extend to $G-V(H)$ by the minimality of $G$ and further to $G$ as follows. Let $\vec{P_1}=u_6u_5\ldots u_1$ and $\vec{P_2}=v_6v_5\ldots v_1$.

    $I$-nicely color the path $\vec{P_1}$. If $P_1$ is an $F$-path, then $F$-nicely color the path $\vec{P_2}$. If $v_1$ is of color $F$, then assign $x$ with $I$; otherwise, reassign $v_1$ with $F$ and assign $x$ with $I$, yielding that the color of $v_1$ brings no $F$-cycle or splitting $F$-path of $D$. We are done in both cases. Hence, we assume that $P_1$ is not an $F$-path.
    
    $I$-nicely color the path $\vec{P_2}$. If $P_2$ is an $F$-path, then $u_1$ must be of color $I$. $F$-nicely recolor the path $\vec{P_1}$ regardless of the edge $u_1v_1$, yielding both $u_1$ and $u_6$ of color $F$. Assign $x$ with $I$. It is easy to see that the edge $u_1v_1$ has both ends of color $F$ and is not contained in any $F$-cycle or splitting $F$-path of $D$, we are done. Hence, we assume that $P_2$ is not an $F$-path.
    
    If not both $u_1$ and $v_1$ are of color $F$, then assign $x$ with $F$, we are done. So, assume that both $u_1$ and $v_1$ are of color $F$.
    If $v_2$ is of color $F$, then reassign $v_1$ with $I$ and assign $x$ with $F$, we are done. So, let $v_2$ be of color $I$. Denote by $t_1$ the neighbor of $u_1$ not in $H$. If $t_1$ is of color $F$, then $F$-nicely recolor the path $P_1$, yielding $u_1$ of color $I$. Assign $x$ with $F$, we are done. So, let $t_1$ be of color $I$.
    $F$-nicely recolor $P_2$, yielding $v_6,v_2,v_1$ of color $F,F,I$, repectively. Assign $x$ with $F$, which might make $u_1xv_6$ be contained in an $F$-cycle or splitting $F$-path of $D$. For this case, remove the color of $x$ and $v_1$, $F$-nicely recolor $P_1$, and assign $x$ with $I$ and $v_1$ with $F$, we are done.  
\end{proof}

A \emph{M-9-face} is a 9-face $[v_1...v_9]$ such that $v_1$,$v_2$,$v_3$,$v_5$,$v_6$,$v_7$ are six bad vertices and $v_4$ is an internal 4-vertex incident two 3-faces, see Figure \ref{fig_M9face}.

\begin{figure}[ht]
	\centering
	\includegraphics[width=6cm]{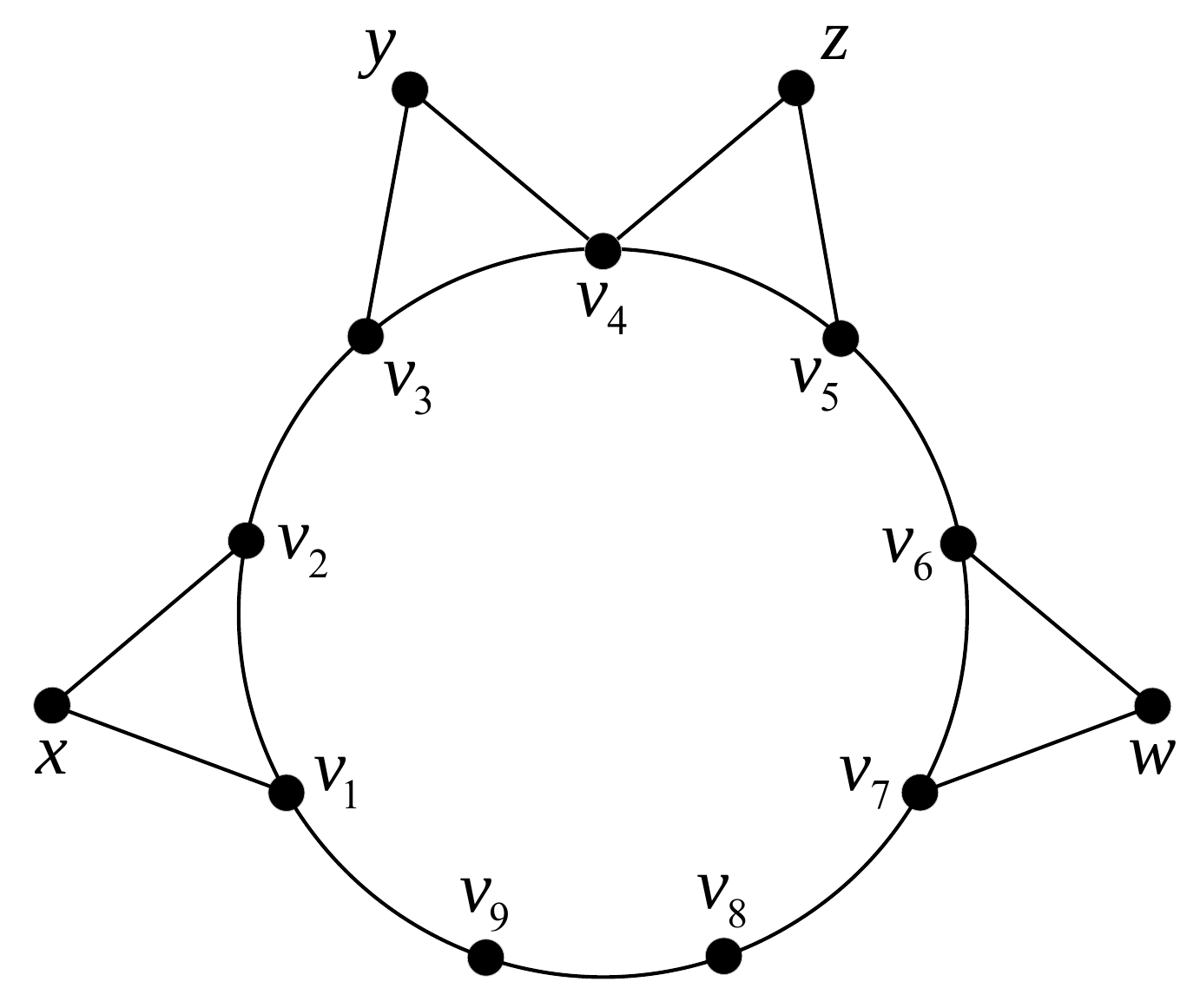}
	\caption{M-9-face}\label{fig_M9face}
\end{figure}

\begin{lemma} \label{lem_M-9-cycle}
	$G$ has no M-9-face.
\end{lemma}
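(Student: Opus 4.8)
The plan is to argue by contradiction exactly as in the reductions for the 3-7-face and 7-7-face (Lemmas~\ref{lem_8-cycle} and~\ref{lem 7-7-face}): assume $G$ contains an M-9-face $H$, delete its internal vertices, super-extend $\phi_0$ to the smaller graph by minimality, and then push the colouring back across $H$. With the labelling of the definition, write $t_i$ for the neighbour of $v_i$ off the $9$-face; since $v_3,v_5$ are internal $3$-vertices adjacent to $v_4$, the two $3$-faces at $v_4$ are forced to be $[v_3v_4t_3]$ and $[v_4v_5t_5]$, so $v_4$ has neighbours $v_3,v_5,t_3,t_5$ and $V(H)=\{v_1,\dots,v_7\}$. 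Deleting these seven internal vertices leaves $D$ and its goodness untouched and creates no new cycle, and it involves no vertex-identification or edge-addition, so (unlike the reductions through Lemma~\ref{operation}) minimality alone supplies a super-extension $\phi$ of $\phi_0$ to $G-V(H)$, colouring in particular $v_8,v_9$ and all the $t_i$. This reduction step is routine.

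All of the work is in extending $\phi$ to $v_1,\dots,v_7$. After deletion each of $v_1,v_2,v_6,v_7$ retains one coloured neighbour $t_i$, each of $v_3,v_5$ retains the coloured triangle-apex $t_3,t_5$, and the path $v_1\cdots v_7$ hangs between the coloured boundary vertices $v_9$ and $v_8$. I would colour this path by the $I$-nicely and $F$-nicely colourings defined earlier, treating $v_4$ and its two triangles by hand, while respecting exactly the three obstructions that a super-extension forbids: (i) no all-$F$ triangle on $[v_3v_4t_3]$ or $[v_4v_5t_5]$; (ii) no all-$F$ $9$-cycle $v_1\cdots v_9$; and (iii) no splitting $F$-path of $D$ running through $H$. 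The first move is to try $v_4=I$: this is legitimate precisely when $t_3$ and $t_5$ are both coloured $F$, and it simultaneously destroys both triangles and the $9$-cycle, after which the two pendant pieces $v_3v_2v_1$ and $v_5v_6v_7$ can be nicely coloured independently and no $F$-path can cross the $I$-coloured $v_4$.

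The main obstacle is the complementary case, in which $v_4$ is forced onto $F$ because $t_3$ or $t_5$ is already coloured $I$. Then each triangle must place its single $I$ at $v_3$ or $v_5$, which fixes part of the $9$-cycle, and one must still rule out an all-$F$ $9$-cycle and every splitting $F$-path along $v_1\cdots v_7$. I expect this to go in the style of Lemma~\ref{lem 7-7-face}: suppose each admissible choice creates an $F$-cycle or a splitting $F$-path, locate that object inside $G-V(H)$ using that $v_8,v_9$ and the $t_i$ are already fixed, and contradict the fact that $\phi$ is a genuine super-extension. The delicate bookkeeping is the interplay between the two triangles at $v_4$ and the $F$-linkedness of the consecutive pairs among $v_9,t_1,t_2,t_3,t_5,t_6,t_7,v_8$ along the path; proving that the two boundary directions cannot both be $F$-linked, so that at least one direction stays free to complete the colouring, is where the argument will be most laborious and where a subcase is easiest to miss.
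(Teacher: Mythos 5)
Your structural reading of the configuration is correct (the two $3$-faces at $v_4$ must be $[v_3v_4t_3]$ and $[v_4v_5t_5]$, since any other placement creates a $4$-cycle), but the reduction you propose --- plain deletion of $v_1,\dots,v_7$ followed by a hand extension --- is not the paper's argument and, more importantly, cannot be completed. The paper deletes only $S=\{v_1,v_2,v_3,v_5,v_6,v_7\}$, keeps $v_4$, and identifies the edge $v_8v_9$ with the edge $zv_4$ (where $z$ is the apex of the triangle on $v_4v_5$), invoking Lemma \ref{operation_merge_edge}. The whole point of the identification is that the super-extension to the smaller graph then forces $v_4$ and $v_9$ (resp.\ $z$ and $v_8$) to share a color and, exactly as in cases (\romannumeral2) and (\romannumeral4) of Lemma \ref{lem_good path}, rules out the $F$-linkedness patterns that would otherwise obstruct recoloring the face.

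With plain deletion you get none of these constraints, and the obstruction is real. Consider a super-extension of $\phi_0$ to $G-\{v_1,\dots,v_7\}$ in which $v_8,v_9,t_1,t_2,t_3,t_5,t_6,t_7$ are all internal, all colored $F$, and all lie on a single $F$-path of the induced forest: this is compatible with ``no $F$-cycle and no splitting $F$-path of $D$,'' so nothing in the minimality argument excludes it, and every pair among these eight vertices is then $F$-linked. Since $t_3=t_5=F$, the two triangles force either $v_4=I$ with $v_3=v_5=F$, or $v_4=F$ with $v_3=v_5=I$. In the second case $v_4$ acquires the two $F$-linked $F$-neighbors $t_3,t_5$, closing an $F$-cycle. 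In the first case, $v_2=F$ would create the $F$-path $t_2v_2v_3t_3$ and hence an $F$-cycle, so $v_2=I$, which forces $v_1=F$, and $v_1$ then has the two $F$-linked $F$-neighbors $v_9$ and $t_1$ --- again an $F$-cycle. So no extension exists. This also refutes your claim that after setting $v_4=I$ the pendant paths ``can be nicely coloured independently'': the last vertex $v_1$ (resp.\ $v_7$) ends up with three already-colored neighbors, which is precisely the situation the nice-coloring device does not cover, and the non-$F$-linkedness you say you would need is not derivable from the super-extension of the deleted graph --- it has to be engineered in advance, which is exactly what the edge identification accomplishes.
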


\begin{proof}

    Suppose to the contrary that $G$ has an M-9-face $f$, using the same label for vertices as in Figure \ref{fig_M9face}. Let $S_1=\{v_1,v_2,v_3\}$, $S_2=\{v_5,v_6,v_7\}$, and $S=S_1\cup S_2$.
    Apply on $G$ the operation $T$ as follows: remove all the vertices of $S$ and identify two edges $v_8v_9$ and $zv_4$ so that $z$ is identified with $v_8$, obtaining a smaller plane graph $G'$. 
    Denote by $T_1$ (resp., $T_2$) the operation on $G$ consisting of remove all the vertices of $S$ and identifying $v_8$ with $z$ (resp., $v_9$ with $v_4$).
    Similarly as the proof of Lemma \ref{lem_good path}, we can deduce that both the items $(a)$ and $(b)$ hold for $T_1$ as well as $T_2$. Moreover, notice that $v_4z$ is contained in no $8^-$-cycle of $G-S$. 
    
    By Lemma \ref{operation_merge_edge}, the pre-coloring $\phi_0$ can super-extend to $G'$ and further to $G$ as follows.
    Color the vertices of $S_1$ as well as $S_2$ in the same way as we did for good path in the proof of Lemma \ref{lem_good path}. Clearly, the resulting coloring is a proper $(I,F)$-coloring of $G$. It remains to show that the coloring of $S$ brings neither $F$-cycle nor splitting $F$-path of $D$. Otherwise, denote by $H$ such a new $F$-cycle or splitting $F$-path of $D$ in $G$. 
    The way we color $S_1$ and $S_2$ implies that $V(H)\cap S_1\neq \emptyset$ and $V(H)\cap S_2\neq \emptyset$, and the coloring of $S_1$ as well as $S_2$ belongs to Case (\romannumeral2) or (\romannumeral4) of the proof of Lemma \ref{lem_good path}. Thus, all the four vertices we identified are of color $F$. So, $v_5$ is of color $I$, which yields that the coloring of $S_2$ belongs to Case (\romannumeral2) not Case (\romannumeral4) and further that the coloring of $S_2$ brings neither $F$-cycle nor splitting $F$-path of $D$, contradicting that $V(H)\cap S_2\neq \emptyset$.
\end{proof}

\subsection{Incompatibility of reducible configurations}
By exactly the same discharging procedure of the article \cite{469}, we can derive the incompatibility of reducible configurations of $G$ as depicted in Lemmas \ref{lem_min degree} to \ref{lem_M-9-cycle}. More precisely, in the subsection 2.1 of \cite{469}, the authors prove reducible configurations for minimal counterexample $H\in \mathcal{G}$, which are exactly the same as Lemmas \ref{lem_min degree} to \ref{lem_M-9-cycle} of this paper. The subsection 2.2 of \cite{469} are discharging procedure, which shows that these reducible configurations for $H$ (equivalently, Lemmas \ref{lem_min degree} to \ref{lem_M-9-cycle} for $G$) are incompatible. This incompatibility completes the proof of Theorem \ref{thm_main_extension}.

\section{Acknowledgement}
Yingli Kang is supported by National Natural Science Foundation of China (Grant No.: 11901258), Natural Science Foundation of Zhejiang Province of China (Grant No.: LY22A010016), and Department of Education of Zhejiang Province of China (Grant No.: FX2022084).
Ligang Jin is supported by National Natural Science Foundation of China (Grant No.: 11801522) and Natural Science Foundation of Zhejiang Province of China (Grant No.: LY20A010014).

\end{document}